\DeclarePairedDelimiter\floor{\lfloor}{\rfloor}
\DeclarePairedDelimiter\ceiling{\lceil}{\rceil}
\newtheorem{theorem}{Theorem}[section]
\newtheorem*{theorem*}{Theorem}
\newtheorem{Lemma}[theorem]{Lemma}
\newtheorem*{Lemma*}{Lemma}
\newtheorem{proposition}[theorem]{Proposition}
\newtheorem*{proposition*}{Proposition}
\newtheorem{maintheorem}{Theorem}
\theoremstyle{definition}
\newtheorem{definition}[theorem]{Definition}
\title{On random presentations with fixed relator length}
\author{C. J. Ashcroft, Colva M. Roney-Dougal}
\begin{document}
\maketitle
\begin{abstract}
	The \emph{standard $(n, k, d)$ model of random groups} is a model where
        the 
relators are chosen randomly from the set of cyclically reduced words 
of length $k$ on an $n$-element generating set. Gromov's density model
of random groups considers the case where $n$ is fixed, and $k$ tends
to infinity. We instead fix $k$, and let $n$ tend to
infinity. We prove that for all $k \geq 2$
at 
density $d > 1/2$ a random group in this model is trivial
or cyclic of order two, whilst for  $d <  \frac{1}{2}$ such  a random group
is infinite and hyperbolic. In addition we
show that for $d<\frac{1}{k}$ such a random group  is free, and that this
threshold is sharp. These extend known results for the triangular ($k
= 3$) and square ($k = 4)$ models of random groups.
\end{abstract}

\section{Introduction and statement of results} 
Gromov's density model of random groups is a famous construction in
modern group theory, introduced in \cite{gromov1993geometric} to
answer the question of what a ``generic" group looks like. Models of
random groups have also been used to construct exotic groups, as in
\cite{gromov2003random}.  

Let $n\geq 2$, $k\geq 2$, and $0<d<1$. Randomly uniformly select a set
$R$ of distinct cyclically reduced words of length $k$ over the
alphabet $\{a_{1},\hdots, a_{n}\}$ of size $N=\lfloor (2n-1)^{kd}
\rfloor$ from amongst all such sets. Then let $G=\langle a_{1},\ldots,
a_{n}\;\vert\; R\rangle$. The group $G$ is a \emph{random group} in
the \emph{standard (n,k,d) model} of random groups. If we take the set
$R$ to contain only positive words over the alphabet (that is, words
where no letters are inverses of the generators), we obtain the
\textit{positive $(n,k,d)$ model}, introduced by
Odrzyg{\'o}{\'z}d{\'z}. By  \emph{an $(n, k, d)$ model} we mean either
the standard or the positive $(n, k, d)$ model. 

Gromov showed (see also \cite{Ollivier2004}) that for any fixed $n
\geq 2$, with probability
tending to $1$ as $k$ tends to $\infty$, a random group in the
standard $(n,k,d)$ model is trivial or isomorphic to $\mathbb{Z}_{2}$
for $d>\frac{1}{2}$, and infinite, hyperbolic, and torsion free for
$d<\frac{1}{2}$, so that $d=\frac{1}{2}$ is a sharp phase transition
for the density model as $k \rightarrow \infty$.

\begin{definition}
Let $k\in\mathbb{N}_{\geq 2}$, and let $0< d< 1$. Let $\mathcal{P}$ be
a property of groups preserved by isomorphism, and let $\mathcal{M}(n,
k, d)$ be an $(n, k, d)$ model. 
 If \begin{equation*}
	\lim_{n\rightarrow \infty}P(\mbox{A random group in }\mathcal{M}(n,k,d)\mbox{ satisfies }\mathcal{P})=1,
\end{equation*}
we will say that \emph{at density $d$, asymptotically almost surely
  (a.a.s.), a random group in the $\mathcal{M}(n, k, d)$ model
  satisfies $\mathcal{P}$.} We shall also call the standard $(n, k,
d)$-model as $n \rightarrow \infty$ the \emph{standard $k$-angular
  model at density $d$}, with a comparable definition of the \emph{positive
  $k$-angular model}. 
\end{definition}

{\.Z}uk studied the standard $3$-angular model, 
also known as the \emph{triangular} model, showing in
\cite{zuk2003property} that at density $d<\frac{1}{2}$ a.a.s. a group
in the triangular model is hyperbolic. He also showed that at density $d<\frac{1}{3}$
a.a.s. a group in the triangular model is isomorphic to $F_{m}$ for
some $m\in\mathbb{N}$, and that at density $d > \frac{1}{3}$ such a
group satisfies Property (T) (and so is not free). 

The standard $4$-angular model is also known as the the \emph{square}
model. This was studied more recently by Odrzyg{\'o}{\'z}d{\'z} in 
\cite{odrzygozdz2014square}, who showed at at $d>\frac{1}{2}$ a.a.s. a
group in the square model is cyclic of order two,
whilst at $d<\frac{1}{2}$ a.a.s. such a group is
hyperbolic. Furthermore, at density $d<\frac{1}{4}$ a.a.s. a group in
the square model  is isomorphic to $F_{m}$ for some
$m\in\mathbb{N}$.

In
\cite{odrzygozdz2014square}, Odrzyg{\'o}{\'z}d{\'z}  also introduced
and studied the positive square model, showing that at density
$d>\frac{1}{2}$ a.a.s. a group in this model is cyclic of order four, 
whilst at $d<\frac{1}{2}$ a.a.s. such a group is
hyperbolic. Furthermore, at $d<\frac{1}{4}$ a.a.s. a random group in
the positive square model  is isomorphic to $F_{m}$ for some $m\in\mathbb{N}$.

This paper generalises this work of {\.Z}uk  and
Odrzyg{\'o}{\'z}d{\'z} to all values of $k$. 
For the standard $k$-angular model we have the following:
\begin{maintheorem}\label{mainthm2} Let $k\geq 2$, and let $G$ be a
  random group in the standard $(n, k, d)$ model.
	\begin{enumerate}[label=\roman*)]
			\item If $d> \frac{1}{2}$  then asymptotically almost surely $G$ is trivial ($k$ odd) or cyclic of order two ($k$ even).
			\item If $d<\frac{1}{2}$  then asymptotically
                          almost surely $G$ is hyperbolic, infinite
                          and torsion-free.
	\end{enumerate}
\end{maintheorem}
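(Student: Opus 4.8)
The plan is to treat the two density regimes by completely different arguments.

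\emph{Part (i), $d>\frac12$.} The point is that $d>\frac12$ forces $2kd>k$, so the expected number of ordered pairs of relators in $R$ that are distinct as cyclic words but share a common cyclic subword of length $k-1$ is of order $(2n-1)^{2kd-k+1}$, an exponent strictly greater than $1$. A first- and second-moment argument then gives that a.a.s.\ there are more than $n\log n$ such pairs. I would observe that each such pair $\{r_1,r_2\}$ may be written, up to cyclic conjugacy, as $r_1\sim w\alpha$ and $r_2\sim w\beta$ with $|w|=k-1$ and $\alpha,\beta$ distinct letters, and hence imposes in $G$ the relation $\alpha=\beta$. Viewing these relations as edges on the generating set, a.a.s.\ their number exceeds the connectivity threshold of the associated random graph, so a.a.s.\ all generators become identified up to inversion --- possibly forcing some $a_i^2=1$ --- and in particular $G$ is cyclic. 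To pin down $G$ exactly I would abelianise: $G^{\mathrm{ab}}=\mathbb Z^n/M$, with $M$ generated by the integer vectors $v_r$ of the relators, each of $\ell^1$-norm at most $k$ and with coordinate-sum $\equiv k\pmod 2$. A union bound over all sign vectors $\eta\in\{\pm1\}^n$ --- harmless since $kd>1$ makes $N$ of polynomial degree $>1$ in $n$, so that $2^n(1-c)^N\to 0$ --- shows that a.a.s.\ $\gcd_r\langle v_r,\eta\rangle\le2$ for every $\eta$. Together with the parity of coordinate-sums and the fact that $G$ is cyclic, this forces $G$ to be trivial when $k$ is odd and $\mathbb Z_2$ when $k$ is even.

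\emph{Part (ii), $d<\frac12$.} Here the heart of the matter is a Gromov-type isoperimetric inequality: for every $\epsilon>0$, a.a.s.\ every reduced van Kampen diagram $D$ over $\langle a_1,\dots,a_n\mid R\rangle$ (disk or spherical) satisfies $|\partial D|\ge(1-2d-\epsilon)k\,|D|$. I would prove this by a union bound over the abstract shapes of reduced diagrams with $m$ faces --- there are at most $C(k)^m$ of them --- and over the ways of labelling a shape consistently by relators: filling a shape with $m$ faces and boundary length $|\partial D|$ forces roughly $(km-|\partial D|)/2$ coincidences among the letters of the relators used, while each of the (at most $m$) distinct relators involved lies in $R$ only with probability about $(2n-1)^{-k(1-d)}$, so the per-shape probability is of order $(2n-1)^{|\partial D|/2-(1/2-d)km}$; when $|\partial D|<(1-2d-\epsilon)km$ this is at most $\bigl(C(k)(2n-1)^{-k\epsilon/2}\bigr)^m$, which is summable over $m$ and tends to $0$ as $n\to\infty$. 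Granting the inequality, I would finish as follows: a null-homotopic word of length $L$ bounds a diagram with at most $L/((1-2d-\epsilon)k)$ faces, so $G$ has linear Dehn function and is hyperbolic; a nonempty reduced spherical diagram would have empty boundary and so violate the inequality, whence --- using also the a.a.s.\ facts that no relator is a proper power and no two relators are conjugate or inverse-conjugate --- the presentation complex is aspherical, a $2$-dimensional $K(G,1)$, so that $G$ has cohomological dimension at most $2$ and is torsion-free; and finally $\chi(G)=1-n+N\ne1$ for $n$ large, while a finite torsion-free group is trivial and so has Euler characteristic $1$, so $G$ is infinite.

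The main obstacle is the isoperimetric inequality of Part (ii). It has to hold throughout $d<\frac12$, whereas classical $C'(1/6)$ small cancellation --- which would hand us hyperbolicity, asphericity and torsion-freeness at once --- only reaches $d<\frac1{12}$; for $\frac1{12}\le d<\frac12$ the gain, exactly as in Gromov's density model, has to come from the fact that a random reduced diagram is genuinely spread out. The delicate point in the union bound is the case of a reduced diagram that reuses a few relators many times, so that the number of distinct relators is far below the number of faces; this is controlled using the a.a.s.\ bound that any piece (a common subword of two relators, or of a relator with itself) has length at most $2dk$, together with the absence of proper powers. By comparison, Part (i) is combinatorially elementary, the only care being needed in the random-graph connectivity step and in the abelianisation bookkeeping.
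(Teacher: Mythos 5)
Your Part (ii) is essentially the paper's argument: a union bound over abstract diagram shapes gives the isoperimetric inequality $|\partial D|\geq(1-2d-\epsilon)k|D|$, and your per-shape probability estimate $(2n-1)^{|\partial D|/2-(1/2-d)km}$ matches Lemma~\ref{linisolliv}. Where you diverge is in handling diagrams that reuse a small set of relators many times: you propose piece-length bounds ($\leq 2dk$) and the absence of proper powers, whereas the paper (following Ollivier) assigns \emph{ownership} of each interior edge to a face via a lexicographic rule and then does an Abel summation over the multiplicities $m_i$; the latter works uniformly for all $d<1/2$ with no separate piece analysis. The remaining deductions (hyperbolicity via \cite[Theorem 8]{ollivier2007}, asphericity giving $\mathrm{cd}(G)\leq 2$ and torsion-freeness, Euler characteristic giving infiniteness) are as in Proposition~\ref{prop:free-infinite}.

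Your Part (i) is a genuinely different route, and it has a gap. The paper restricts to \emph{positive} relators (Lemma~\ref{serflingimplies} shows there are $\geq n^{kd'}$ of them via a hypergeometric tail bound), writes each as $xy$ with $|x|=\lfloor k/2\rfloor$, $|y|=\lceil k/2\rceil$, and forms the bipartite graph on (prefix, suffix) pairs. Crucially this is a \emph{uniform} random bipartite graph with a prescribed number of edges, so the connectivity results of Lemmas~\ref{nnconn} and~\ref{red_connected} apply as black boxes, giving $a_i=_G a_j$ for all $i,j$, and then a single non-positive relator $a_{i_1}\cdots a_{i_{k-1}}a_{i_k}^{-1}$ gives $a_1^{k-2}=1$. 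Your construction instead extracts edges on the letter set $X^{\pm}$ from pairs of relators sharing a cyclic subword of length $k-1$, and then asserts that ``their number exceeds the connectivity threshold of the associated random graph.'' But that graph is not an Erd\H{o}s--R\'enyi graph, nor a uniform graph with a fixed edge count: its edges are determined by correlated coincidences among relators, and having more than $n\log n$ of them does not by itself yield connectivity. You would need to actually control how these edges are distributed over pairs of letters (e.g.\ a union bound over all vertex cuts with explicit correlation estimates), and this is precisely the kind of work the paper's prefix/suffix bipartite graph is designed to avoid. A smaller point: your abelianisation step concludes from $\gcd_r\langle v_r,\eta\rangle\leq 2$ and parity that $G$ is trivial ($k$ odd) or $\mathbb{Z}_2$ ($k$ even), but $\gcd=0$ satisfies $\gcd\leq 2$ and would give $G\cong\mathbb{Z}$; you must separately show that a.a.s.\ not all signed exponent sums vanish.
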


As a side-effect of our proof techniques, we also show the following. 

\begin{maintheorem}\label{mainthm1} Let $k\geq 2$, and let $G$ be a
  random group in the positive $(n, k, d)$ model. 
	\begin{enumerate}[label=\roman*)]
			\item If $d> \frac{1}{2}$  then asymptotically almost surely $G$ is
                          cyclic of order $k$.
			\item If  $d<\frac{1}{2}$ then asymptotically
                          almost surely $G$ is hyperbolic, infinite
                          and torsion-free.
	\end{enumerate}
\end{maintheorem}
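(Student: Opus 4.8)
We outline our approach to Theorem~\ref{mainthm1}; it runs in parallel with the proof of Theorem~\ref{mainthm2}, the only change being that one works throughout with positive words rather than arbitrary cyclically reduced ones. Write $N=\lfloor(2n-1)^{kd}\rfloor$ for the number of relators and $X$ for the presentation complex of $G$. As in the classical density model of Gromov \cite{gromov1993geometric} the two regimes reflect whether relators laid out in a van Kampen diagram tend to overlap in more than half their length ($d>\tfrac12$, forcing collapse) or less ($d<\tfrac12$, small cancellation). Two reductions organise the argument. For part~(ii) it suffices to prove that, a.a.s., a linear isoperimetric inequality holds for reduced van Kampen diagrams over the presentation. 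For part~(i), the map $a_i\mapsto1$ defines a surjection $G\to\mathbb{Z}_k$, since every relator has length exactly $k$ and so maps to $0$; hence it is enough to show that a.a.s.\ all the $a_i$ become equal in $G$. Granting that, $G$ is cyclic, each relator collapses to $a^{k}$ so $G$ is a quotient of $\mathbb{Z}_k$, and composing with the surjection forces $G\cong\mathbb{Z}_k$.

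For part~(ii), consider a reduced diagram $D$ with $f=f(D)$ faces, boundary length $|\partial D|$ and total interior-edge length $\mathrm{Int}(D)$; in any reduced diagram $\mathrm{Int}(D)\ge\tfrac12\bigl(kf-|\partial D|\bigr)$. A first-moment estimate in the style of \cite{Ollivier2004, zuk2003property} bounds the probability that $R$ supports a reduced diagram with $f$ faces and $|\partial D|<(1-2d-\varepsilon)kf$: one sums over combinatorial types of such diagrams (of which there are at most $C(k)^{f}$) and over the ways of labelling their faces by relators, and uses that two independent uniformly random positive $k$-letter words agree on a prescribed length-$\ell$ subword with probability $n^{-\ell}$. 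This yields a bound of the shape $\sum_{f\ge1}\bigl(C(k)2^{kd}\bigr)^{f}n^{-\varepsilon'f}$, which tends to $0$ as $n\to\infty$. Hence a.a.s.\ $|\partial D|\ge(1-2d-\varepsilon)k\,f(D)$ for every reduced diagram, so $G$ has a linear Dehn function and is word-hyperbolic. Taking $|\partial D|=0$ in the same estimate, a.a.s.\ there is no non-empty reduced spherical diagram, so $X$ is aspherical; then $\mathrm{cd}(G)\le2$, so $G$ is torsion-free, while $\chi(X)=1-n+N\ne1$ for all large $n$, so $X$ is not contractible and $G\ne1$ --- and a non-trivial torsion-free group is infinite.

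For part~(i), given a positive $(k-1)$-letter word $u$ let $S_u=\{\,i:ua_i\in R\,\}\subseteq\{1,\dots,n\}$; if $i,j\in S_u$ then $ua_i=ua_j=1$ in $G$, so $a_i=a_j$ there. Thus the graph $\Gamma$ on $\{1,\dots,n\}$ formed by placing a clique on each $S_u$ has the property that vertices in the same component are identified in $G$; in particular, if $\Gamma$ is connected then all the $a_i$ coincide in $G$, which is exactly what we need. Now $\Gamma$ is a superposition of (essentially independent) random cliques: for each of the $n^{k-1}$ words $u$, each $i$ lies in $S_u$ with probability $\asymp Nn^{-k}$, so $\Gamma$ behaves like an Erd\H{o}s--R\'enyi graph with $M\asymp N^{2}n^{-(k-1)}$ edges. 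Since $d>\tfrac12$ gives $N\gg(2n-1)^{k/2}$, we get $M\asymp n^{\,k(2d-1)+1}\gg n\log n$, so $\Gamma$ is connected a.a.s.: the expected number of isolated vertices is $\asymp n\exp(-cN^{2}n^{-k})\to0$, and small components are ruled out the same way. Hence a.a.s.\ $G\cong\mathbb{Z}_k$.

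The routine part is the random-graph computation of part~(i), which is standard once the near-independence of the sets $S_u$ is made precise. The main obstacle is the first-moment estimate underlying part~(ii): this is the Gromov--Ollivier argument, and although $k$ is fixed here, it must be run in the regime $n\to\infty$, so one has to verify that the ($n$-independent) count of diagram shapes does not swamp the gain from the probability bound, and --- the genuinely delicate point --- one must carefully treat reduced diagrams in which several faces carry the same relator, since these are where the hypothesis $d<\tfrac12$ is really used, dual to the role of $d>\tfrac12$ in part~(i).
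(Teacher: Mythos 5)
Your part (ii) runs along the same lines as the paper's: adapt Ollivier's first-moment estimate on abstract reduced van Kampen diagrams to positive relators, extract a linear isoperimetric inequality, then deduce torsion-freeness from asphericity and non-triviality from the Euler characteristic. However, the union bound as you write it does not close. The number of abstract spurless reduced diagrams with $f$ faces is not $O(C(k)^f)$: such a diagram may carry up to $f$ distinct relator numbers, and distributing those among the faces already contributes a factor on the order of $f^f$ (the paper's own count, Lemma~\ref{vkdbound}, is $G(f)(2fk^4)^f$), so $\sum_f(\cdot)^f n^{-\varepsilon' f}$ is not a convergent geometric series for any fixed $n$. The paper avoids this by first fixing a finite cap $m$ on the number of faces, so that the diagram count is an $n$-independent constant and the union bound over those finitely many shapes tends to $0$ as $n\to\infty$, and then propagating the isoperimetric inequality from diagrams with at most $m$ faces to arbitrary diagrams via the local-to-global theorem \cite[Theorem~8]{ollivier2007}. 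You flag this as ``the main obstacle'' but the geometric series you offer does not resolve it; you need that bootstrap or an equivalent.

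Your part (i) is a genuinely different construction. The paper builds a bipartite graph whose vertex sets are the positive words of lengths $\lfloor k/2\rfloor$ and $\lceil k/2\rceil$, draws an edge $\{x,y\}$ whenever $xy\in R$, notes that an even-length path forces its endpoints to coincide in $G$, and appeals to Pal\'asti's connectivity theorem for balanced parts (even $k$) or to a bespoke bipartite lemma proved in the appendix (odd $k$). Your graph lives directly on the $n$ generators, with a clique on $S_u=\{i: ua_i\in R\}$ for each positive prefix $u$ of length $k-1$; connectivity again collapses all the $a_i$, and the threshold $d>\tfrac12$ drops out since the expected edge count $\asymp N^2n^{-(k-1)}\asymp n^{1+k(2d-1)}$ exceeds $n\log n$ exactly when $d>\tfrac12$. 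This sidesteps the odd/even dichotomy and the appendix lemma, at the cost of working with a random intersection-type graph whose edges arrive in correlated cliques rather than with the off-the-shelf $\Gamma(a,b,E)$ model; the ``near-independence'' you gesture at needs a real argument (Poissonisation plus a second-moment bound on isolated vertices would do), but there is no conceptual obstruction. Both routes succeed; the paper's reduces to standard bipartite-graph connectivity results, yours is more parsimonious in vertex count and uniform in the parity of $k$.
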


Furthermore, we consider freeness. 
\begin{maintheorem}\label{mainthm3} Let $k\geq 2$, and let $G$ be a
  random group in an $(n, k, d)$ model. If
  $d<\frac{1}{k}$ then asymptotically almost surely $G$ is isomorphic to $F_{m}$ for some
  $m\in\mathbb{N}$, whilst if $d > \frac{1}{k}$ the group $G$ is asymptotically almost surely not isomorphic to a nontrivial free group.
			\end{maintheorem}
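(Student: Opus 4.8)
The plan is to treat the two inequalities separately. Throughout write $N=\lfloor(2n-1)^{kd}\rfloor$ for the number of relators, and let $M$ be the total number of words of length $k$ available in the model (cyclically reduced words over $\{a_1,\dots,a_n\}$, so $M=\Theta((2n-1)^k)$, or positive words, so $M=n^k$; note that positive words are automatically cyclically reduced). I will use repeatedly that $P(w_1,\dots,w_\ell\in R)\le(N/M)^\ell$ for distinct words $w_i$, and $P(R\subseteq T)\le(|T|/M)^N$ for any set $T$ of words, both of which are immediate from $R$ being a uniform $N$-subset of the $M$ words. \textbf{Case $d<1/k$.} Here $kd<1$, so $N=o(n)$, and I would first isolate the obstruction to freeness. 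Call a subset $R'\subseteq R$ \emph{blocked} if for every $r\in R'$ every generator occurring exactly once in $r$ (i.e.\ exactly one letter of $r$ lies in $\{a_i,a_i^{-1}\}$) occurs also in some other relator of $R'$. If no non-empty subset of $R$ is blocked, then one can repeatedly pick a relator $r$ containing a generator $x$ that occurs exactly once in $r$ and in no other surviving relator, solve $r=1$ for $x$ and delete both $r$ and $x$; the substitution alters no surviving relator, since none contains $x$, so after $N$ such steps one is left with the free group $F_{n-N}$. Thus it suffices to show that a.a.s.\ no non-empty subset of $R$ is blocked, and for that it is enough to exclude \emph{minimal} blocked subsets.

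Two facts pin down a minimal blocked subset $R'$. First, $R'$ is connected in the graph $\Gamma$ on $R$ joining two relators that share a generator (a $\Gamma$-component of a disconnected blocked set would itself be blocked). Second, every generator in the support $X'$ of $R'$ occurs at least twice among the relators of $R'$, since a generator occurring once overall would occur exactly once in the unique relator containing it; counting letters, $2|X'|\le k|R'|$. Next, a first moment estimate shows that, since $kd<1$, a.a.s.\ every $\Gamma$-component has size $<C_0$ for some constant $C_0=C_0(k,d)$: building an $\ell$-collection one word at a time, each new word, if the collection is to stay connected, is forced to reuse one of the at most $k(\ell-1)$ earlier generators, so the expected number of connected $\ell$-subsets of $R$ is $O_{\ell,k}\!\big((2n-1)^{\ell(kd-1)+1}\big)$, which tends to $0$ once $\ell(1-kd)>1$; and since a component of size $\ge\ell$ contains a connected sub-collection of size exactly $\ell$, taking $\ell=C_0:=\lceil1/(1-kd)\rceil+1$ suffices. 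Hence every minimal blocked subset has fewer than $C_0$ relators. Finally, for each fixed $\ell<C_0$ the expected number of $\ell$-subsets of $R$ whose support has size at most $k\ell/2$ is $O_{\ell,k}\!\big(n^{k\ell/2}(N/M)^\ell\big)=O_{\ell,k}\!\big((2n-1)^{k\ell(d-1/2)}\big)\to0$, because $d<1/k\le1/2$; summing over the finitely many such $\ell$ completes this case.

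\textbf{Case $d>1/k$.} If $d\ge1/2$ then by Theorems~\ref{mainthm2} and~\ref{mainthm1} $G$ is a.a.s.\ trivial, $\mathbb{Z}_2$ or $\mathbb{Z}_k$, none a non-trivial free group, so assume $1/k<d<1/2$. Since a non-trivial free group has infinite abelianisation, it is enough to prove that a.a.s.\ $G^{\mathrm{ab}}$ is finite, equivalently that the $N\times n$ integer matrix $A$ with rows the abelianised relators has $\operatorname{rank}_{\mathbb{Q}}(A)=n$; for this it suffices that $A$ has rank $n$ modulo some fixed prime $p$ not dividing $2k$. Since $kd>1$, we have $N/n\to\infty$. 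For $v\in\mathbb{F}_p^{\,n}\setminus\{0\}$, $Av\equiv0$ means exactly that $R\subseteq T_v$, where $T_v$ is the set of length-$k$ words orthogonal to $v$ modulo $p$, so $P(Av\equiv0)\le(|T_v|/M)^N$. A word orthogonal to $v$ must avoid $\operatorname{supp}(v)$ or meet it in at least two positions whose $v$-values then sum to $0$ modulo $p$; a short character-sum computation on $\mathbb{F}_p$ turns this into $|T_v|/M\le1-\delta(v)$, where $\delta(v)$ is bounded below by a positive constant depending only on $k$ and $p$ times $m'(v)/n$, with $m'(v)$ the number of coordinates of $v$ differing from the most common value of $v$, and where constant vectors $v$ (those with $m'(v)=0$) satisfy $|T_v|/M\le1-\delta_0$ for a constant $\delta_0>0$, using that $p$ does not divide $2k$. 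Summing $(|T_v|/M)^N\le e^{-\delta(v)N}$ over $v\ne0$, grouped by the value of $m'(v)$ (there are at most $p(np)^{m}$ vectors with $m'(v)=m$), gives $\sum_{v\ne0}(|T_v|/M)^N\le(p-1)e^{-\delta_0N}+p\sum_{m\ge1}(np)^m e^{-cmN/n}\to0$ because $N/n\to\infty$. Hence a.a.s.\ $\operatorname{rank}_{\mathbb{Q}}(A)=n$, so $G^{\mathrm{ab}}$ is finite and $G$ is not a non-trivial free group.

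\textbf{Main obstacle.} The delicate part is the case $d<1/k$: a first moment bound over \emph{all} blocked subsets, with no size restriction, diverges as $d\uparrow1/k$ because the number of $\ell$-subsets with small support carries super-exponential $\ell^\ell$-type factors, so one genuinely needs the structural input that $\Gamma$-components are a.a.s.\ of bounded size, and proving that estimate --- balancing the growth $N^\ell$ of the number of $\ell$-subsets against the improbability $(2n-1)^{-(\ell-1)}$ of the $\ell-1$ letter-coincidences required for connectivity --- is exactly where the hypothesis $kd<1$ must be used sharply. By contrast, the case $d>1/k$ is comparatively routine, the only mild subtlety being the positive model, where the sign-definiteness of the abelianised relators makes directions $v$ close to $\{0,1\}$-vectors nearly orthogonal to many relators and so forces the choice of $p$ coprime to $2k$ together with the slightly finer grouping above.
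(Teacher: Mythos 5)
Your proof is correct in its essentials, but it takes a genuinely different route from the paper on both halves. For $d<1/k$, the paper works with the presentation $2$-complex: it defines antipodal graphs (and their halved variant for odd $k$), shows that if all hypergraph components are trees and no relator repeats a letter then $G$ is free (Theorem~\ref{hyptreefree}), and reduces the tree condition to the non-existence of annular ``relator diagrams'' satisfying Condition $\dagger(\ell)$, which it rules out by a first-moment bound. Your argument replaces all of this geometry with a direct combinatorial obstruction --- a ``blocked'' subset of relators --- and shows no such subset exists by a two-step first moment: bound the size of components of the share-a-generator graph $\Gamma$ on $R$, then bound small-support subsets of bounded size. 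The underlying mechanism is the same (iterated Tietze elimination of a generator occurring exactly once in exactly one surviving relator), and your reduction is sound, including the observation that a minimal blocked subset is $\Gamma$-connected with support of size at most $k\ell/2$; the naive first moment you warn against in your ``main obstacle'' note is indeed why the two-step bound is needed. Your approach is more elementary (no $2$-complexes, no antipodality), at the cost of a slightly more intricate counting argument. For $d>1/k$, the paper simply observes that the asphericity established in Section~\ref{part ii) proof} gives $G$ cohomological dimension $\le 2$, torsion-freeness, and Euler characteristic $1-n+N>1$, which rules out free groups. Your replacement --- showing $G^{\mathrm{ab}}$ is finite by proving the relator matrix has full rank mod a prime $p\nmid 2k$ via a union bound over $v\in\mathbb F_p^n\setminus\{0\}$ grouped by $m'(v)$ --- also works (I checked that $\delta(v)\gtrsim_{k,p} m'(v)/n$ holds by conditioning most positions on the majority value of $v$, and that the geometric series converges since $N/n\to\infty$), and is self-contained, not relying on the asphericity machinery; on the other hand the Euler-characteristic argument is one line once asphericity is in hand. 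The only soft spot is the character-sum step, which you invoke but do not carry out; it is genuinely needed (and a bit fiddlier in the standard model, where cyclic reduction correlates adjacent letters), so in a full write-up you would want to spell out the conditioning estimate rather than appeal to characters. Finally, like the paper's own proof of Proposition~\ref{prop:free-infinite}, your case split leaves $d=1/2$ (for $k\ge 3$) formally uncovered, though your rank argument in fact extends there since $kd>1$ still holds.
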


The paper is structured as follows. In Section \ref{part i) proof} we
prove Theorems~\ref{mainthm2}(i)  and \ref{mainthm1}(i).  In
Section \ref{part ii) proof} we prove part (ii) of the same theorems. 
In Section \ref{sec: freeness} we prove Theorem \ref{mainthm3}, and
conclude with a short appendix giving a new proof of a known result on
random bipartite graphs. 

%--------------------------------------------------------------------------------------------------
%   SUBSEC: TRIVIALITY AT D>1/2
%-----------------------------------------------------------------------------------------------

\section{Groups at density greater than $1/2$}\label{part i) proof}
In this section we will use random graphs to understand the
relationship between the generators of a random group, and in particular
to show that asymptotically almost surely in a random group in a 
$k$-angular model at density greater than ${1}/{2}$, all generators are equal.

\begin{definition}
	The \emph{random bipartite graph} $\Gamma(a,b,E(a,b))$ is a
        graph obtained by sampling uniformly at random from the set of
        all bipartite graphs $\Gamma$ with parts $V_1$ and $V_2$ such that $\vert V_{1}\vert=a$, $\vert V_{2} \vert=b$, and $\vert E(\Gamma)\vert =E(a,b)$. 

\end{definition}

The following result is
similar to a famed theorem of Erd\"{o}s and R\'enyi.

\begin{Lemma}[{\cite[Theorem
1]{palasti1963connectedness}}]\label{nnconn}
	Let $a>0$, let $\varepsilon \in (0, 1)$, and let $P(a,a,a^{1+\varepsilon})$ denote the probability that a random bipartite graph $\Gamma(a,a,a^{1+\varepsilon})$ is connected. Then 
	\begin{equation*}
		\lim\limits_{a\rightarrow\infty}P(a,a,a^{1+\varepsilon})=1.
	\end{equation*}
\end{Lemma}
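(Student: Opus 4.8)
The plan is to adapt the classical second-moment/union-bound argument that Erdős and Rényi use for $G(n,m)$ to the balanced bipartite setting, showing that a random bipartite graph with $a^{1+\varepsilon}$ edges is a.a.s.\ connected. The standard strategy is to bound the probability that $\Gamma(a,a,a^{1+\varepsilon})$ is \emph{disconnected} by summing, over all ways of splitting the two sides, the probability that no edge crosses between the two halves of the split. Write $E = \lfloor a^{1+\varepsilon}\rfloor$, and note that the total number of possible edges is $a^2$.

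First I would set up the union bound. If $\Gamma$ is disconnected, then there is a nonempty proper subset $S$ of the $2a$ vertices which is a union of connected components; say $S$ meets $V_1$ in $i$ vertices and $V_2$ in $j$ vertices, so that all $E$ edges avoid the $i(a-j) + j(a-i)$ potential edges between $S$ and its complement. The probability of this event, for fixed $(S, i, j)$, is the hypergeometric quantity
\begin{equation*}
\frac{\binom{a^2 - i(a-j) - j(a-i)}{E}}{\binom{a^2}{E}} \;\le\; \left(1 - \frac{i(a-j)+j(a-i)}{a^2}\right)^{E},
\end{equation*}
using the elementary bound $\binom{M-t}{E}/\binom{M}{E} \le (1 - t/M)^E$. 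Multiplying by the number $\binom{a}{i}\binom{a}{j}$ of choices of such $S$ and summing over $1 \le i+j \le a$ (by symmetry it suffices to handle the ``small side'' case, say $i \le j$ with an appropriate normalization, or simply to sum over all $i,j$ not both zero or both $a$) gives an upper bound on the probability of disconnectedness. The dominant contribution comes from isolated vertices, i.e.\ $(i,j) = (1,0)$ and $(0,1)$, where the cut size is $a$; there the bound is roughly $2a\bigl(1 - 1/a\bigr)^{a^{1+\varepsilon}} \approx 2a\, e^{-a^{\varepsilon}} \to 0$.

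The main obstacle is controlling the sum over the ``bulk'' terms where $i$ and $j$ are comparable to $a$: there the binomial coefficients $\binom{a}{i}\binom{a}{j}$ are exponentially large in $a$, so one needs the cut size $i(a-j)+j(a-i)$ to be large enough — of order $a$ at least, in fact $\Omega(a \log a / a^{\varepsilon}) $ would already do — that the factor $(1 - (\text{cut})/a^2)^{E}$ beats them. For a component-set with $i \le j$ one has $i(a-j)+j(a-i) \ge j(a-i) \ge j \cdot (a - i) $, and when $i \le a/2$ this is $\ge ja/2 \ge a/2$; when both $i,j$ are large the cut is even larger (at least $(a/2)(a/2)$ order, swamping everything). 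So the argument splits into the regime of tiny components (handled by the isolated-vertex estimate, which is the genuine threshold phenomenon) and the regime of macroscopic components (handled crudely since the cut is then $\Theta(a)$ or bigger and $E/a^2 = a^{\varepsilon - 1}$ still forces $(1-\text{cut}/a^2)^E$ to decay faster than $4^a$ grows). Carefully bounding the intermediate range — $i$ small but $j$ growing, or vice versa — using $\binom{a}{i} \le (ea/i)^i$ and the inequality $(1-x)^E \le e^{-xE}$ is the one place where the estimates need to be done with a little care; I would break the range of $i$ at a threshold like $a^{1-\varepsilon/2}$ to separate these behaviours. Once each of these finitely many regimes is shown to contribute $o(1)$, summing them gives $P(a,a,a^{1+\varepsilon}) \to 1$.
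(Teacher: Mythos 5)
The paper does not prove this lemma at all: it is stated with a citation to Pal\'asti's 1963 theorem on the connectedness of bichromatic random graphs and used as a black box (the appendix supplies a proof of the related Lemma~\ref{red_connected}, not of this one). So there is no internal proof to compare against; what you have written is an independent, self-contained argument.

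Your first-moment/union-bound plan is the standard way to establish one side of this connectivity threshold, and the overall structure is sound: the isolated-vertex term really is the bottleneck, and you bound it correctly by roughly $2a\,e^{-a^{\varepsilon}}\to 0$. The quantitative step in the bulk regime, however, is mis-stated as written. If the cut is merely $\Theta(a)$, then $(1-\mathrm{cut}/a^{2})^{E}\approx e^{-\Theta(a^{\varepsilon})}$, which does \emph{not} beat $\binom{a}{i}\binom{a}{j}$ when these are genuinely exponential in $a$; and a cut of size $\Omega(a\log a/a^{\varepsilon})$ gives only a polynomial gain $a^{-c}$, not an exponential one, so your parenthetical remark applies only where the binomial factors are themselves polynomial. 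The clean fix, which avoids the fiddly regime-splitting you gesture at, is this: passing to the smaller side of a disconnection you may assume $s:=i+j\le a$, and then
\begin{equation*}
i(a-j)+j(a-i)=as-2ij\ge as-\tfrac{s^{2}}{2}\ge \tfrac{as}{2},
\end{equation*}
so that $(1-\mathrm{cut}/a^{2})^{E}\le e^{-s a^{\varepsilon}/2}$, while $\binom{a}{i}\binom{a}{j}\le a^{s}$. The whole sum is then dominated by $\sum_{s\ge 1}(s+1)\bigl(a\,e^{-a^{\varepsilon}/2}\bigr)^{s}$, a near-geometric series whose $s=1$ term is exactly your isolated-vertex estimate, and which tends to $0$. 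In short: right approach, correct dominant term, but the estimate you sketch for macroscopic $S$ needs the $\ge as/2$ lower bound (or an equivalent) to actually close.
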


The next  result follows with only a little work from 
 \cite[Theorem 9]{johansson2012giant}, where the corresponding result
 is shown for an Erd\"os-R\'enyi random bipartite graph, 
but for convenience we 
include a proof in the Appendix to this paper.

\begin{Lemma}\label{red_connected}
Let $d \in (1/2, 1)$, and let 
$\Gamma$ be a  $\Gamma(n^m, n^{m+1}, n^{(2m+1)d})$ random bipartite
graph. 
Then asymptotically almost surely all vertices in $V_{1}$ 
are in the same connected component of $\Gamma$.
\end{Lemma}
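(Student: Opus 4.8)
The plan is to split into two parameter ranges and handle the denser one with Lemma~\ref{nnconn} alone, the sparser one by passing to the binomial model and invoking \cite{johansson2012giant}. Throughout write $a = n^m$, $b = n^{m+1}$, $N = n^{(2m+1)d}$, so that $ab = n^{2m+1}$, $N = (ab)^d$, and the natural edge probability is $p := N/(ab) = n^{(2m+1)(d-1)}$. The event ``every vertex of $V_1$ lies in a single connected component'' is monotone increasing under edge addition, so I can move between the uniform model $\Gamma(a,b,N)$ and the binomial model $G(a,b,p)$ in the usual way: $|E(G(a,b,p))| \sim \mathrm{Bin}(ab,p)$ concentrates around $N$ with fluctuations $O(\sqrt N) = o(N)$, and since every threshold below lies a polynomial factor of $n$ away from $p$, perturbing $p$ by a $1+o(1)$ factor is harmless.

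For $d > \frac{m+1}{2m+1}$ I would argue directly. Pick any $B' \subseteq V_2$ with $|B'| = a$ and let $\Gamma'$ be the subgraph of $\Gamma$ induced on $V_1 \cup B'$; conditioned on its number of edges, $\Gamma'$ is a uniform balanced random bipartite graph on two parts of size $a$. Its expected edge count is $N|B'|/|V_2| = n^{(2m+1)d-1}$, which is concentrated, and since $d > \frac{m+1}{2m+1}$ is equivalent to $(2m+1)d - 1 > m$ while $d < 1$ gives $(2m+1)d - 1 < 2m$, this count is a.a.s.\ at least $a^{1+\varepsilon}$ for some fixed $\varepsilon \in (0,1)$. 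By Lemma~\ref{nnconn} and monotonicity of connectivity, $\Gamma'$ is then connected a.a.s., so all of $V_1$ lies in one component of $\Gamma' \subseteq \Gamma$.

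The remaining range $\tfrac12 < d \le \frac{m+1}{2m+1}$ is where I expect the real work. Here I would work in $G(a,b,p)$ and apply \cite[Theorem~9]{johansson2012giant}: for $d > 1/2$ the quantities $pb = \Theta(n^{(2m+1)d-m})$ and $p\sqrt{ab} = \Theta(n^{(2m+1)(d-1/2)})$ grow polynomially in $n$, placing $G(a,b,p)$ well above the threshold at which the whole of the smaller side $V_1$ is swallowed by the unique giant component. The subtlety --- and the step I expect to be the main obstacle --- is that in this range the graph is genuinely disconnected: the mean $V_2$-degree $pa = \Theta(n^{(2m+1)d-m-1})$ does not grow, so a.a.s.\ many vertices of $V_2$ are isolated; one must therefore use the one-sided form of the giant-component statement and then do a little work to upgrade ``$V_1$ meets the giant in a set of the right size'' to ``$V_1 \subseteq$ giant''. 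I would close this last gap with two first-moment bounds in $G(a,b,p)$: (i) $\mathbb{E}[\#\text{ isolated vertices of }V_1] \le a(1-p)^b \le n^m e^{-pb} \to 0$ since $pb = n^{\Omega(1)}$; and (ii) there is a.a.s.\ no component $C$ with $\emptyset \neq C\cap V_1 \subsetneq V_1$ and $|C|$ below the giant scale, since any such $C$ would force some $v\in V_1$ to have its whole neighbourhood (of size $\approx pb = n^{\Omega(1)}$) inside $C$, and a union bound over the admissible small sets $C$ makes this unlikely. Transferring back to $\Gamma(a,b,N)$ via the first paragraph then completes the proof.
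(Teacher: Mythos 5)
Your proposal takes a genuinely different route from the paper. The paper argues uniformly over the whole range $d \in (1/2,1)$: it transfers to the binomial model to establish a minimum degree $\geq n^{1/2 + \varepsilon(2m+1)}$ on $V_1$ (Lemmas~\ref{erdosrenyiconnected} and~\ref{bipartitegraphconn}), then shows that any two vertices of $V_1$ share a common $V_2$-neighbour with probability at least $\frac{1}{4}n^{-(m-\mu)}$ for some $\mu>0$, builds an auxiliary graph on $V_1$ with this edge density, and concludes via the Erd\"os--R\'enyi connectivity threshold \cite{ErdosRenyi}. You instead split the density range. Your argument for $d > (m+1)/(2m+1)$ is complete and correct, and is in fact a neat shortcut the paper does not use: restricting to a balanced induced subgraph on $V_1 \cup B'$ with $|B'| = n^m$, a hypergeometric concentration estimate gives $\gg (n^m)^{1+\varepsilon}$ edges, so Lemma~\ref{nnconn} plus monotonicity finishes. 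For $\frac12 < d \le (m+1)/(2m+1)$ you invoke \cite[Theorem~9]{johansson2012giant}, which is precisely the route the paper explicitly declines --- it remarks that the lemma ``follows with only a little work'' from that theorem but opts to give a direct argument instead. Here your write-up leaves a genuine gap: your step (ii), the first-moment bound ruling out intermediate components $C$ with $\emptyset \neq C\cap V_1 \subsetneq V_1$, is the entirety of that ``little work'' and is left as a hand-wave. As written it does not visibly close: the union bound must beat $\binom{n^m}{|C\cap V_1|}\binom{n^{m+1}}{|C\cap V_2|}$, and you have not produced a probability estimate strong enough for this, nor explained how the constraint that $C$ has no edges crossing its boundary on \emph{either} side (which is what must be exploited, not merely that one vertex's neighbourhood stays inside $C$) yields such decay. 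The paper's pairwise-common-neighbour construction is designed exactly to sidestep this component-structure bookkeeping. In short, your dense-range argument is a valid alternative proof there, but your sparse-range half remains a plan rather than a proof until that union bound is carried out.
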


For the remainder of this paper, 
let $C_{k,n}$ be the set of cyclically reduced words of length
$k$ in $F_{n}$, and $C_{k,n}'$ be the set of positive words of length
$k$ in $F_{n}$. We now prove a theorem that immediately implies
Theorem~\ref{mainthm1}(i). 

\begin{theorem}\label{posmodtriv}
Let $k\geq 2$ and $d>\frac{1}{2}$. Let $G=\langle a_1,  \ldots, a_n
\;\vert\;R\rangle$ be a random group in the positive $(n,k,d)$
model. Asymptotically almost surely
 the group $G$ is isomorphic to $\mathbb{Z}_{k}$, and 
$a_1 =_G \cdots =_G a_n$.
\end{theorem}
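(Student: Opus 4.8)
The plan is to encode the relations coming from positive relators as a bipartite graph on the generators, and then apply Lemma~\ref{red_connected} to conclude that all generators are forced to be equal in $G$. Fix a positive relator $w = a_{i_1} a_{i_2} \cdots a_{i_k} \in R$. In the abelianization, or more simply reading the relation $w =_G 1$, one learns little from a single relator; the key observation is that a positive relator of length $k$ behaves, up to the cyclic symmetry, like a word in which consecutive ``halves'' must cancel. More precisely, if $k$ is even, write $w = u v$ with $u = a_{i_1}\cdots a_{i_{k/2}}$ and $v = a_{i_{k/2+1}}\cdots a_{i_k}$; then $w=_G 1$ says $u =_G v^{-1}$, but since all the $a_{i_j}$ are positive this is a relation between a positive word of length $k/2$ and the inverse of one. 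To get genuine equalities among single generators, I would instead compare \emph{pairs} of relators sharing a common subword: if $w = s a_i$ and $w' = s a_j$ are two relators with the same length-$(k-1)$ prefix $s$, then $w=_G 1 =_G w'$ gives $a_i =_G a_j$. So the combinatorial object to study is the bipartite graph whose left vertices are the $n$ generators (or length-$1$ words) and whose right vertices are the length-$(k-1)$ positive words, with an edge $(a_i, s)$ whenever $s a_i \in R$ (and we cyclically rotate to get all $k$ such decompositions of each relator).

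The heart of the argument is then a counting/probabilistic step. There are $n$ generators and $n^{k-1}$ positive words of length $k-1$, and $|R| = \lfloor (2n-1)^{kd} \rfloor$ relators, each contributing $k$ edges to this bipartite graph (one for each cyclic rotation), so the graph has on the order of $k\,(2n-1)^{kd} \sim k\, n^{kd}$ edges up to lower-order factors. Writing the left part as having size $n = n^1$ and the right part as having size $n^{k-1}$, the total number of vertices is roughly $n^{k-1}$, and with $d > 1/2$ the edge count $n^{kd}$ is comparable to $\big(n^{k-1}\big)^{\!\frac{kd}{k-1}}$; since $\tfrac{kd}{k-1} > \tfrac{k}{2(k-1)} \geq \tfrac12$ for $d>\tfrac12$, this should match the hypotheses of Lemma~\ref{red_connected} with $m = k-1$ after checking that sampling $R$ uniformly from $k$-subsets of $C_{k,n}'$ induces (asymptotically) the uniform random bipartite graph model with the stated number of edges. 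Granting that, Lemma~\ref{red_connected} tells us that a.a.s.\ all left vertices — i.e.\ all generators $a_1,\dots,a_n$ — lie in one connected component; and since an edge-path in this graph between $a_i$ and $a_j$ translates into a chain of relations $a_i =_G a_{i'} =_G \cdots =_G a_j$, we conclude $a_1 =_G \cdots =_G a_n$.

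Once all generators are identified, write $a = a_1 =_G \cdots =_G a_n$. Then $G$ is a quotient of $\langle a \rangle = \mathbb{Z}$, and every relator $w = a_{i_1}\cdots a_{i_k}$ becomes $a^k =_G 1$, so $G$ is a quotient of $\mathbb{Z}_k$. It remains to rule out $G$ being a proper quotient $\mathbb{Z}_\ell$ with $\ell \mid k$, $\ell < k$: this is the kind of statement that should follow from a first-moment estimate showing that a.a.s.\ no ``short'' positive relation of the form $a^\ell$ (equivalently, no relator reading as $a^\ell$ repeated) is imposed beyond what length-$k$ relators force, or alternatively from the fact that the abelianization of $G$ is $\mathbb{Z}_k$ because the $n\times n$ relation matrix over $\mathbb{Z}$ has, a.a.s., elementary divisors $1,\dots,1,k$ — each relator abelianizes to a vector summing to $k$ with nonnegative entries, and a counting argument shows these vectors generate the sublattice $\{x : \sum x_i \equiv 0 \bmod k\}$... wait, that gives $\mathbb{Z}\times\mathbb{Z}_k$ on the abelianization before imposing $a_i = a_j$; combined with $a_i =_G a_j$ we land at exactly $\mathbb{Z}_k$. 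So $G \cong \mathbb{Z}_k$.

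I expect the main obstacle to be the \textbf{reduction from uniform $k$-subsets of $C_{k,n}'$ to the uniform random bipartite graph model} of Lemma~\ref{red_connected}: the edges produced by a single relator are correlated (the $k$ cyclic rotations of one word are not independent, and two rotations of the same relator can even coincide when the word is a proper power), and one must check that these correlations and the (rare) degenerate relators do not affect connectivity of $V_1$ a.a.s. A clean way around this is probably to expose only \emph{one} rotation per relator, or to restrict attention to the sub-bipartite-graph on aperiodic relators and argue that there are still enough of them; either way some care is needed to verify that the effective edge density still exceeds the $1/2$ threshold of Lemma~\ref{red_connected}. The purely group-theoretic steps (connectedness $\Rightarrow$ all generators equal $\Rightarrow$ quotient of $\mathbb{Z}_k$) are routine by comparison.
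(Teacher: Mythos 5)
Your overall strategy — encode identifications of generators via a bipartite graph built from positive relators, apply Lemma~\ref{red_connected} to get connectedness, conclude all generators are equal, then pin down $G \cong \mathbb{Z}_k$ — is the right shape, but the graph you build does not satisfy the hypotheses of Lemma~\ref{red_connected}, and the way you try to patch it makes things worse.

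The concrete gap is the parameter mismatch. Lemma~\ref{red_connected} is stated for a $\Gamma(n^m, n^{m+1}, n^{(2m+1)d})$ random bipartite graph, so the two parts must have sizes that differ by a factor of $n$. Your $V_1$ (the $n$ generators) and $V_2$ (the $n^{k-1}$ positive words of length $k-1$) differ by a factor of $n^{k-2}$, which fits the lemma only for $k=3$ — and even then the value you would need is $m=1$, not the $m=k-1$ you wrote, so your plugging-in is simply incorrect for every $k$. The paper instead splits each relator into two halves $w = xy$ with $|x| = \lfloor k/2\rfloor$ and $|y| = \lceil k/2\rceil$, and puts an edge $\{x,y\}$ exactly when $xy \in R$. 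This yields a $\Gamma(n^{\lfloor k/2\rfloor}, n^{\lceil k/2\rceil}, n^{kd})$ graph. For $k$ even the parts are balanced and Lemma~\ref{nnconn} (P\'alasti) gives connectedness; for $k$ odd, setting $m = \lfloor k/2\rfloor$ gives $|V_1| = n^m$, $|V_2| = n^{m+1}$, and edge count $n^{kd} = n^{(2m+1)d}$, exactly the form Lemma~\ref{red_connected} requires. The splitting point is tuned precisely for this. Then a path of even length between two $V_1$ vertices chains equalities $x =_G y^{-1} =_G x' =_G \cdots$, so all of $H_n^l$ is equal in $G$, and comparing $a_i^{\lfloor k/2\rfloor}$ with $a_i^{\lfloor k/2\rfloor - 1} a_j$ gives $a_i =_G a_j$.

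The correlation worry you flag at the end is not a side issue you can wave off; it is fatal to the version with ``$k$ cyclic rotations per relator,'' because those $k$ edges come in correlated batches and the cyclic rotations of a relator are not themselves in $R$, so you have left the $\Gamma(a,b,E)$ model entirely. The clean fix is to take exactly one decomposition per relator (so the graph really is a uniform random bipartite graph with $|R|$ edges), which is exactly what the paper's half-splitting achieves. Finally, the last step is simpler than you make it: once all generators coincide, every relator becomes $a^k$, so $G$ is cyclic and a quotient of $\mathbb{Z}_k$; and since every relator abelianizes to a nonnegative integer vector with coordinate sum $k$, the relation lattice sits inside $\{v : \sum v_i \equiv 0 \bmod k\}$, so $G^{\mathrm{ab}}$ surjects onto $\mathbb{Z}_k$, forcing $G \cong \mathbb{Z}_k$. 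No first-moment argument or Smith normal form computation is needed.
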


\begin{proof}
Let
\begin{equation*}	
\begin{split}
H_{n}^{l}=&\{a_{i_{1}}\hdots a_{i_{\floor*{\frac{k}{2}}}}\;\vert\; a_{i_{j}}\in \{a_{1},\hdots, a_{n}\}\},\\
H_{n}^{u}=&\{a_{i_{1}}\hdots a_{i_{\ceiling*{\frac{k}{2}}}}\;\vert\; a_{i_{j}}\in \{a_{1},\hdots, a_{n}\}\},
\end{split}
\end{equation*}
and notice that  $C_{k,n}'= \{xy\;\vert \;x\in H_{n}^{l},y\in H_{n}^{u}\}$. 

Define a random bipartite graph $\Gamma$ with parts $V_1$ and $V_2$ as follows. Take as $V_1$ the elements of $H_{n}^{l}$ and as $V_2$ the elements of $H_{n}^{u}$. For $x\in H_{n}^{l}$ and $y\in H_{n}^{u}$, draw an edge between $x$ and $y$ if $xy\in R$.
 Then $\Gamma$  is a $\Gamma(n^{\floor*{\frac{k}{2}}}, n^{\ceiling*{\frac{k}{2}}}, n^{kd})$ random bipartite graph. 

Let $\varepsilon=2d-1>0$. 
If $k$ is even then $n^{kd}=(n^{\frac{k}{2}})^{1+\varepsilon}$, so by Lemma
 \ref{nnconn} a.a.s. this graph is connected.
If $k$ is odd then by Lemma \ref{red_connected} a.a.s. 
there exists a connected component of $\Gamma$  spanning $V_1$.

Hence in both cases a.a.s. there is a path of even length 
between any two vertices in $V_1$. An edge between $x$ and $y$ 
corresponds to $xy\in R$, and so $x=_{G}y^{-1}$. Hence a path of even
length corresponds to equality in $G$, and therefore a.a.s. all
elements in $H_{n}^{l}$ are equal in $G$. In particular for distinct
$a_{i},a_{j}$, 
a.a.s. $a_{i}^{\floor*{\frac{k}{2}}}=_{G}a_{i}^{\floor*{\frac{k}{2}}-1}a_{j}$, and so $a_{i}=_{G}a_{j}$.

All relators are positive words of length $k$, so a.a.s. $G$ is isomorphic to $\mathbb{Z}_{k}$.
\end{proof}

We now wish to show that a.a.s. a random group in the standard $(n, k, d)$
model at density $d>\frac{1}{2}$ contains enough positive words as
relators to force the generators to be equal. We first record a
well-known probability estimate. 
\begin{Lemma}[{\cite[Corollary 1.1]{serfling1974}}]\label{serfling}
	Sample without replacement from a finite list $x_{1},\hdots,x_{M}$. Let $X_{1},\hdots,X_{m}$ be these samples. Define $S_{m}=\sum_{i=1}^{m}X_{i}$, $\mu=\frac{1}{M}\sum_{i=1}^{M}x_{i}$. Also let $y_{0}=\min x_{i}$, $y_{1}=\max x_{i}$. Then
	\begin{equation*}
		P[S_{m}\geq m\mu+mt]\leq exp[-2mt^{2}/(y_{1}-y_{0})^{2}].
	\end{equation*}
\end{Lemma}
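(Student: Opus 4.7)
The plan is to mimic Hoeffding's classical argument for IID bounded variables, replacing the independence step by a reduction from without-replacement to with-replacement sampling. First, I would apply the Chernoff bound: for any $s > 0$,
\[
P[S_m \geq m\mu + mt] \leq e^{-smt}\, E\bigl[e^{s(S_m - m\mu)}\bigr],
\]
reducing the problem to controlling the moment generating function of $S_m$ under sampling without replacement.

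Next, I would invoke Hoeffding's 1963 theorem that, for any continuous convex $\phi$, if $X_1, \ldots, X_m$ are sampled without replacement from the list and $Y_1, \ldots, Y_m$ are sampled independently with replacement from the same list, then $E[\phi(\sum X_i)] \leq E[\phi(\sum Y_i)]$. Taking $\phi(z) = e^{s(z - m\mu)}$, this dominates the MGF of $S_m$ by that of the IID sum $T_m = \sum_i Y_i$. For $T_m$, the summands $Y_i - \mu$ are independent, centred, and supported in an interval of length at most $y_1 - y_0$, so Hoeffding's lemma gives $E[e^{s(Y_i - \mu)}] \leq \exp(s^2 (y_1 - y_0)^2 / 8)$, and multiplying over the $m$ independent factors yields
\[
E\bigl[e^{s(T_m - m\mu)}\bigr] \leq \exp\bigl(m s^2 (y_1 - y_0)^2 / 8\bigr).
\]

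Finally, I would optimise in $s$: minimising $-smt + m s^2 (y_1 - y_0)^2 / 8$ at $s^\ast = 4t/(y_1 - y_0)^2$ and substituting back produces exactly the claimed bound $\exp[-2mt^2 / (y_1 - y_0)^2]$.

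The main obstacle is Hoeffding's convex-domination step. Its standard proof averages $\phi(X_1 + \cdots + X_m)$ over all $\binom{M}{m}$ possible subsets, rewrites this as an expectation over a uniformly random permutation of the population, and then uses convexity of $\phi$ together with a Jensen-style estimate to replace the permutation average by the IID expectation. Once this domination is established, the rest is the routine Chernoff-plus-Hoeffding calculation, and the without-replacement nature of the sampling only enters through this one convex-ordering inequality.
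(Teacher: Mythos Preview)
Your argument is correct: the Chernoff bound plus Hoeffding's convex-domination theorem for sampling without replacement, followed by Hoeffding's lemma and the usual optimisation in $s$, does yield exactly the stated inequality.

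Note, however, that the paper does not prove this lemma at all. It is simply recorded as a known estimate, with a citation to Serfling's original paper (Corollary~1.1 there). So there is no ``paper's own proof'' to compare against. For what it is worth, Serfling's original argument proceeds somewhat differently: he works directly with the without-replacement process, constructs a reverse martingale from the partial averages, and applies a conditional version of Hoeffding's lemma at each step. Your route via the with-replacement reduction is the other standard proof and is arguably cleaner, since it isolates the dependence issue in a single convex-ordering inequality and then recycles the IID calculation verbatim. Both approaches give the same bound; Serfling's martingale method has the advantage of also producing sharper constants that exploit the finite-population correction, but that refinement is not needed here.
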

We now prove the following, extending from $k=4$ in \cite[Lemma 2.9]{odrzygozdz2014square}.

\begin{Lemma}\label{serflingimplies}
	Let $k, n\geq 2$,  $d>\dfrac{1}{2}$, and let $G=\langle
        X\;\vert\; R\rangle$ be a random group in the standard $(n,k,d)$ model. With probability tending to $1$ as $n$ tends to $\infty$, $\vert R\cap C_{k,n}' \vert \geq n^{kd'}$ for any $\frac{1}{2}<d'< d$.
\end{Lemma}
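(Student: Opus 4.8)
The plan is to show that among the $N = \lfloor (2n-1)^{kd} \rfloor$ relators chosen, the number landing in $C_{k,n}'$ is concentrated around its expectation, which is large. First I would compute the expected proportion: since relators are sampled without replacement from $C_{k,n}$, and $|C_{k,n}'| = n^k$ while $|C_{k,n}| = (2n-1)^k(1 + o(1))$ (the standard count of cyclically reduced words), the expected fraction of relators in $C_{k,n}'$ is $\mu = |C_{k,n}'|/|C_{k,n}| \sim (n/(2n-1))^k \to 2^{-k}$. Hence $\mathbb{E}[|R \cap C_{k,n}'|] = N\mu \sim (2n-1)^{kd} \cdot 2^{-k}$, which grows like $n^{kd}$ up to a constant; in particular it exceeds $n^{kd'}$ by a polynomial factor for any $d' < d$.

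Next I would set up Lemma \ref{serfling} (Serfling's inequality) to control the deviation. Take the list $x_1, \ldots, x_M$ with $M = |C_{k,n}|$ to be the indicator values $\mathbf{1}[w \in C_{k,n}']$ as $w$ ranges over $C_{k,n}$, so each $x_i \in \{0,1\}$, giving $y_1 - y_0 = 1$. Set $m = N$. Then $S_m = |R \cap C_{k,n}'|$, and we want a lower-tail bound rather than the upper-tail bound as stated — but Serfling's inequality applies symmetrically to $-X_i$ (equivalently, sampling without replacement from $1 - x_i$), so $P[S_m \leq m\mu - mt] \leq \exp(-2mt^2)$. Choosing $t$ so that $m\mu - mt = n^{kd'}$, i.e. $t = \mu - n^{kd'}/N$: since $n^{kd'}/N \to 0$ (as $d' < d$) while $\mu \to 2^{-k}$ is bounded below by a positive constant, we have $t \geq c > 0$ for large $n$. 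Therefore $P[|R \cap C_{k,n}'| < n^{kd'}] \leq \exp(-2Nc^2) \to 0$ since $N \to \infty$. This gives the claim.

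The main technical point — and the only place requiring care — is justifying the asymptotic count $|C_{k,n}| = (2n-1)^k + O((2n-1)^{k-1})$ uniformly as $n \to \infty$ with $k$ fixed, so that $\mu$ is indeed bounded away from $0$; this is a routine transfer-matrix or inclusion–exclusion computation for cyclically reduced words, and for fixed $k$ it is elementary. One should also confirm that the floor in $N = \lfloor (2n-1)^{kd} \rfloor$ is harmless, which it is since $N \to \infty$. I do not expect any genuine obstacle here: once concentration via Serfling is in place, the inequality $N\mu - n^{kd'} \geq (c/2)N$ for large $n$ follows immediately from $d' < d$, and the exponential bound $\exp(-\Omega(N))$ dominates. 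The statement for $\frac{1}{2} < d' < d$ is then uniform over such $d'$ because we may simply fix one value of $d'$ arbitrarily close to $d$; any smaller $d'$ is implied.
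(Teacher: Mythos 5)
Your proposal is correct and takes essentially the same approach as the paper. The paper applies Serfling's upper-tail bound to the indicators of $r_i \in C_{k,n}\setminus C'_{k,n}$ with a fixed threshold $\frac{2^k}{2^k+1}(2n-1)^{kd}$ and constant $t=\frac{2^k}{2^k+1}-\mu$, which is precisely the complementation you invoke to obtain a lower-tail bound for $|R\cap C'_{k,n}|$; the only cosmetic difference is that you let $t$ depend on $n$ while the paper fixes a constant.
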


\begin{proof}
Randomly selecting $R$ is equivalent to sampling relators from $C_{k, n}$ without replacement. Let $\frac{1}{2}<d'< d$. We show that
\begin{equation*}
P[\vert R\cap C_{k,n}' \vert < \frac{1}{2^{k}+1}(2n-1)^{kd}]=P[\vert R\cap( C_{k,n}\setminus C_{k,n}') \vert\geq \frac{2^{k}}{2^{k}+1}(2n-1)^{kd}]\rightarrow 0\mbox{ as }n\rightarrow\infty.
\end{equation*}
As $n \rightarrow \infty$, with $k$ and $d$ fixed,  $$\frac{1}{2^{k}+1}(2n-1)^{kd}\geq\frac{(2n-1)^{kd-kd'}}{2^{k}+1}n^{kd'}\geq n^{kd'}.$$
 and the result will follow. 
 
  Consider the following. For relators $r_{1},\hdots, r_{(2n-1)^{kd}}$ define the random variable $X_{i}$ by
\begin{equation*}
	X_{i}=\begin{cases}
		1\mbox{ if }r_{i}\in C_{k,n}\setminus C_{k,n}' ,\\
		0\mbox{ otherwise.}
	\end{cases}
\end{equation*}
This equivalent to sampling without replacement from $\vert C_{k,n}\setminus C_{k,n}'\vert $ ones and $\vert C_{k,n}'\vert $ zeros. Notice that $|C_{k, n}| < 2n(2n-1)^{k-1}$, whilst $|C'_{k, n}| = n^k$.   In the notation of Lemma \ref{serfling}, 
\begin{equation*}
\mu=\frac{\vert C_{k,n}\setminus C_{k,n}'\vert}{\vert C_{k,n}\vert} = 1 - \frac{|C'_{k, n}|}{|C_{k,n}|}< 1 - \frac{n^{k}}{(2n)^{k}} < 1 - \frac{1}{2^{k}},
\end{equation*}
$y_{0}=0,y_{1}=1$, and $m=(2n-1)^{kd}$.
 Hence, letting $t=\frac{2^{k}}{2^{k}+1}-\mu>0$,
\begin{equation*}
 P[\vert R\cap C_{k,n}\setminus C_{k,n}' \vert\geq  \frac{2^{k}}{2^{k}+1}(2n-1)^{kd}]=P_{m}(t)\leq  exp[-2(2n-1)^{kd}t^{2}]\rightarrow 0\mbox{ as }n\rightarrow\infty.
\end{equation*}
\end{proof}

\begin{proof}[Proof of Theorem~\ref{mainthm2}(i)]
	By Lemma \ref{serflingimplies}, a.a.s. $G$ has at least
        $n^{kd'}$ positive relators for any $\frac{1}{2}<d'<d$. Hence
        by Theorem~\ref{posmodtriv}, a.a.s. $a_{i}^{k}=_{G}1$ for all
        generators $a_{i}$ of $G$, and $a_i = a_j$ for all $i$ and
        $j$. 
Also, a.a.s. there is a cyclic conjugate of a word of the form
$a_{i_{1}}\hdots a_{i_{k-1}}a_{i_{k}}^{-1}$ in $R$ -- there are at
least $k(2n-2)n^{k-1}$ such cyclic conjugates, and a proof similar to
Lemma \ref{serflingimplies} follows. 
Hence a.a.s $a_1^{k-2}=_G 1$, and so
$G$ is isomorphic to a cyclic group of order $2$ if $k$ is even, and
$1$ if $k$ is odd. 
\end{proof}

\section{Groups at density less than $1/2$}\label{part ii) proof}

In this section we shall prove Theorems~\ref{mainthm2}(ii) and
\ref{mainthm1}(ii). 

We first introduce the diagrams with which we shall be
working; our definitions follow 
\cite{Ollivier2004}. For a set
$X$, we write
$X^{\pm}$ to denote the (disjoint) union $X \cup X^{-1}$.

\begin{definition} Let $G=\langle X\;\vert\;R\rangle$ be a group. A
  \emph{van Kampen diagram} for $G$ is a planar,
simply-connected, finite $2$-complex, $A$ such that 
\begin{enumerate}[label=\roman*)]
	\item each $1$-cell in $A$ is labelled by an element in $X^\pm$,
	\item if $e$ is an oriented $1$-cell with the opposite orientation denoted $e^{-1}$, then the label of $e$ is the inverse of the label of $e^{-1}$,
	\item each $2$-cell $D$ of $A$ is oriented, and has a marked
          start point on $\partial D$. Reading along $\partial D$ from
          the start point, in the direction given by the orientation,
          yields a relator $r \in R$. We say that $D$ \emph{bears}
          $r$.  
 \end{enumerate}If $w\in F_{X}$ is freely reduced, $A$ is a van Kampen
 diagram for $G$, and there exists a $0$-cell in $\partial A$ such
 that reading clockwise along $\partial A$ and concatenating the
 labels of the $1$-cells in $\partial A$, $w$ is the word obtained,
 then $A$ is a van Kampen diagram \emph{for $w$}, and $w$ is  a
 \emph{boundary word} for $A$. (If $A$ is a sphere, then $w$ is the
 empty word.) 
\end{definition}

We will write $\vert A\vert$ for the number of $2$-cells in $A$, and $\vert \partial A\vert$ for the number of $1$-cells in $\partial A$ (or the length of a boundary word of $A$). We will switch freely between the words vertex and $0$-cell, edge and $1$-cell, and face, region and $2$-cell.

\begin{definition}
 Let $G$ be a group and $A$ a van Kampen diagram for $G$. 
The diagram $A$ is
 \emph{unreduced} if there exist regions $D_{1}$ and $D_{2}$ in $A$
bearing the same relator with opposite orientations, and with shared
edge representing the same letter in the relator (with respect to the
marked start points). 
A van Kampen
 diagram is \emph{reduced} if it is not unreduced. 
\end{definition}

Note that this definition of reduced is slightly weaker than the
standard definition, due to the marked start points. 
It is standard that any unreduced van Kampen diagram can be
transformed into a reduced van Kampen diagram without altering the
boundary word (as an element of $F(X)$). 

\begin{definition}
Let $A$ be a van Kampen diagram, with each edge labelled by a
generator or its inverse, bearing $N$ distinct relators
$r_{i_{1}},\hdots,r_{i_{N}}$. We construct the \emph{abstract 
  van Kampen diagram} from $A$ as follows. Each face bearing relator
$r_{i_{j}}$ is labelled with the number $j$, we record the orientation
of each face, and the starting point of the relator in the face
boundary. An abstract van Kampen diagram is
\emph{reduced} if it does not contain two regions with the same
relator number that have opposite orientations, share an edge, with
the common edge having the same position in the boundary with respect
to the marked start position. 
 For an abstract van Kampen diagram, $A$, we write
 $\vert \partial A\vert$ for the number of edges in the boundary of
 $A$, and $\vert A\vert$ for the number of $2$-cells in $A$.  
\end{definition}

The following applies to both van Kampen diagrams and abstract van Kampen diagrams.
\begin{definition}\label{spurdef}
A \emph{spur} in a diagram $A$ is an edge such that either its start or end vertex is of degree $1$.
 The diagram $A$ is \emph{spurless} if it has no spurs. 
 A \emph{filament} in $A$ is a non-spur edge, $e$, such that for
 all regions, $D$, in $A$, the intersection $e\cap \partial D=\emptyset$.
 An edge is \emph{non-filamentous} if is not a filament.   A vertex is
 \emph{exterior} if it lies on $\partial A$, and \emph{interior} if it
 is not exterior. Similarly, an edge is \emph{exterior} if it lies on
 $\partial A$, and \emph{interior} if it is not exterior.
\end{definition}

Notice that if all relators are assumed to be cyclically reduced, then
every interior edge is automatically non-filamentous. We shall
therefore implicitly make this assumption from now on. 

\begin{definition}
An abstract reduced van Kampen diagram $A$ is \emph{fulfillable} with
respect to the presentation $G=\langle X\;\vert\;R\rangle$ if there
exist relators $r_{i_{1}},\hdots, r_{i_{N}} \in R$ such that each
relator $r_{i_{j}}$ attaches to faces bearing $j$ (respecting start
vertex and orientation) and the result is a valid reduced van Kampen
diagram for $G$, i.e. there exist letters in $X^{\pm}$ that can label
spurs and filaments which gives rise to a reduced van Kampen diagram
for $G$. In this case, the relators $r_{i_{j}}$ are said to
\emph{fulfil} $A$. Relators $r_{i_{1}},\hdots ,r_{i_{m}}$ ($m\leq N$)
\emph{partially fulfil} $A$ if the assignments of $r_{i_{j}}$ to
regions bearing $j$ do not produce a contradiction.
\end{definition}

\begin{definition}
Let $A$ be an abstract van Kampen diagram. For a face
$f$, bearing edge $e$, let $(i(e,f),j(e,f))$ be the tuple where
$i(e,f)$ is the relator number $f$ bears, and $j(e,f)$ is the position of $e$
in $\partial f$ with respect to the marked start point. We order the tuples lexicographically, so that
$(i(e,f),j(e,f))>(i(e',g),j(e',g))$ if $i(e,f)>i(e',g)$ or if
$i(e,f)=i(e',g)$ and $j(e,f)>j(e',g)$. 

If an edge $e$ is interior
and incident to faces $f,g$ such that
$(i(e,f),j(e,f))>(i(e,g),j(e,g))$, 
then $e$ \emph{belongs} to $f$.  We do \emph{not} assign ownership 
of exterior edges.
Note that if $f \neq g$ then
$(i(e,f),j(e,f)) \neq (i(e,g),j(e,g))$ since otherwise
 either $f$ bears the same relator number in the
opposite orientation to $g$ and with equivalent start points 
(and so $A$ is not reduced), or a letter is its own inverse in $F_{n}$. 
For a face $f$, let $\omega (f)$ 
be the number of edges that belong to $f$, and for $i=1,\hdots, N$ let 
	\begin{equation*}
\omega_{i}=\max\limits_{f\; bearing\; i}\omega (f).		
	\end{equation*}
\end{definition}

\begin{Lemma}\label{pi inequality Olliv}
Let $A$ be a reduced abstract van Kampen diagram with
relator numbers $\{1, \ldots, N\}$ and with all faces of boundary length $k$. 
	For $i=1,\hdots,N$, let $p_{i}$ be the probability that $i$
        randomly chosen cyclically reduced words of length $k$
        partially fulfil $A$ (as relator numbers $1, \ldots, i$), 
        and let $p_{0}=1$. Then for $i=1,\hdots,N$,
\begin{equation*}
	\frac{p_{i}}{p_{i-1}}\leq (2n-1)^{-\omega_{i}}\frac{2n-1}{2n-2}.
\end{equation*}
\end{Lemma}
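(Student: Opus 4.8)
The plan is to bound the conditional probability $p_i/p_{i-1}$ directly by a counting argument: condition on a choice of relators $r_{i_1},\dots,r_{i_{i-1}}$ that partially fulfils $A$ (as numbers $1,\dots,i-1$), and bound the probability that a freshly chosen cyclically reduced word of length $k$, assigned to the faces bearing $i$, extends the partial fulfilment. Pick a face $f$ bearing $i$ with $\omega(f)=\omega_i$, i.e. a face to which $\omega_i$ edges belong. For each edge $e$ belonging to $f$, the neighbouring face $g$ across $e$ has a strictly smaller tuple $(i(e,g),j(e,g))$, hence bears a relator number $\le i-1$, and that relator has already been chosen. So the letter that $e$ must carry in position $j(e,f)$ of $r_i$ is already determined by the partial fulfilment. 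Reading the boundary of $f$ from its start vertex, the word $r_i$ must be cyclically reduced of length $k$ and must agree with these $\omega_i$ prescribed letters in the appropriate positions; the remaining positions are free.

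The core estimate is then: among all cyclically reduced words of length $k$ over an $n$-generator alphabet, the fraction having $\omega_i$ prescribed letters in $\omega_i$ prescribed positions is at most $(2n-1)^{-\omega_i}\cdot\frac{2n-1}{2n-2}$. To see this I would first count cyclically reduced words with a prefix of length $\omega_i$ fixed (after cyclic rotation we may assume the prescribed positions are an initial segment, or simply handle them one at a time): each new letter, given the previous one, has at least $2n-2$ admissible choices (avoiding the inverse of its predecessor, and — for the last letter of a cyclic word — also the inverse of the first), so the number of cyclically reduced length-$k$ words with a fixed admissible block of $\omega_i$ letters is at most $(2n-1)^{k-\omega_i}$ up to a boundary correction, while $|C_{k,n}|\ge (2n-1)^k\cdot\frac{2n-2}{2n-1}$ (the standard lower bound coming from the transfer-matrix eigenvalue count, or from $|C_{k,n}| = (2n-1)^k + (2n-1)(\text{lower order})$). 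Dividing gives exactly the claimed ratio $(2n-1)^{-\omega_i}\frac{2n-1}{2n-2}$. One must also observe that, because $A$ is reduced, no two distinct faces impose contradictory constraints via a shared edge (this is precisely the remark that $f\ne g$ forces distinct tuples), so the $\omega_i$ prescribed letters are mutually consistent and genuinely cut down the count by the full factor; and faces bearing $i$ other than $f$ only impose further constraints, which can only decrease the probability, so focusing on the single worst face $f$ suffices.

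The main obstacle I anticipate is the bookkeeping around cyclic reduction at the "seam" of the length-$k$ cyclic word: fixing letters in arbitrary (not necessarily contiguous) positions, and then counting completions that remain cyclically reduced, is where the constant $\frac{2n-1}{2n-2}$ (rather than a clean $(2n-1)^{-\omega_i}$) comes from, and getting this correction in the right direction requires a careful transfer-matrix or inclusion–exclusion argument rather than a naive product bound. A secondary subtlety is making precise the claim that the edges belonging to $f$ have their labels already determined by relators $1,\dots,i-1$: this uses that $e$ interior and belonging to $f$ means the \emph{other} incident face $g$ has smaller relator number, hence $i(e,g)\le i-1$, hence $g$'s relator is among the already-sampled ones in the partial fulfilment — exterior edges are deliberately unassigned precisely so this implication has no exceptions. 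Once these two points are pinned down, the inequality follows by dividing the count of valid completions by $|C_{k,n}|$ and summing (trivially) over the at-most-one worst face.
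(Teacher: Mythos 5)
Your overall strategy matches the paper's: condition on the first $i-1$ chosen relators, pick a face $f$ bearing relator number $i$ with $\omega(f) = \omega_i$, and argue that each of the $\omega_i$ edges belonging to $f$ costs roughly a factor of $2n-1$ in probability. However, there is a genuine gap in your reduction to ``prescribed letters.'' You assert that if $e$ belongs to $f$ then the other incident face $g$ satisfies $(i(e,g), j(e,g)) < (i(e,f), j(e,f))$ and ``hence bears a relator number $\le i-1$.'' That inference is false: in the lexicographic order a strictly smaller tuple may share its first coordinate, so $g$ may be another face bearing relator number $i$ with $j(e,g) < j(e,f)$. In that case the label of $e$ is \emph{not} determined by $w_1,\dots,w_{i-1}$; instead the constraint is that position $j(e,f)$ of $w_i$ must agree (up to orientation) with position $j(e,g)$ of $w_i$ itself, an intra-word matching constraint. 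Your stated core estimate --- the fraction of cyclically reduced words of length $k$ having $\omega_i$ \emph{externally prescribed} letters in prescribed positions --- does not cover this case as written, and the cyclic-rotation reduction to an initial block makes sense only for externally fixed values, not for a constraint tying one free position to another. The paper sidesteps this by building $w_i$ letter by letter in position order and observing that in both cases (label forced by some $w_j$ with $j<i$, or label forced by an earlier position of $w_i$) the value is already determined when that letter is chosen, so each owned edge contributes a per-letter factor of at most $\tfrac{1}{2n-1}$, with at worst one factor of $\tfrac{1}{2n-2}$; the repair to your argument is to enlarge ``prescribed letters'' to ``constraints each of which is either a prescribed value or an agreement with an earlier position'' and then run the same one-letter-at-a-time count.

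A secondary, smaller issue: your appeal to reducedness to conclude that the $\omega_i$ constraints are ``mutually consistent'' is not what that hypothesis gives, nor is it needed. Reducedness (together with $f\neq g$) guarantees the tuples $(i(e,f),j(e,f))$ and $(i(e,g),j(e,g))$ are distinct, which is exactly what makes ownership of each interior edge well-defined; it says nothing about consistency of the resulting letter constraints. If they happen to be inconsistent the partial-fulfilment probability is $0$, which only strengthens the upper bound, so no consistency claim is required.
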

\begin{proof}
	Let $1\leq i\leq N$. Let us assume this is true for
        $p_{1},\hdots, p_{i-1}$, and assume that we have chosen
        relators $w_{1},\hdots, w_{i-1}$ that partially fulfil
        $A$. Now suppose that we have chosen $0\leq l<k$ letters of
        the relator $w_{i}$, and let $f$ be a face of $A$ bearing
        relator number $i$. We now choose the letter 
        corresponding to an edge $e$ of $f$. 

If $e$ belongs to $f$, then there exists another face, $g$, with
$(i(e,g),j(e,g))<(i(e,f),j(e,f))$. This means that either the edge
appears in some word $w_{j}$ for $j<i$, and so the label of $e$ is
already fixed, or it appears as an earlier edge in a face bearing the
same relator as $f$ (and so as an earlier edge in $f$), and therefore
the label of $e$ is also fixed. We choose letters to obtain a
cyclically reduced word: there are $2n$ choices for the first, $2n-1$
for the middle $k-2$ letters, and $2n-1$ or $2n-2$ choices for the
last letter.  Hence the probability of a randomly chosen letter being
valid is less than or equal to $\frac{1}{2n}\leq\frac{1}{2n-1}$ if
this is the first letter, $\frac{1}{2n-1}$ for the middle $k-2$
letters, and $\frac{1}{2n-1}$ or$\frac{1}{2n-2}$ for the final
letter. 

If $e$ does not belong to  $f$, then the probability that a
randomly chosen label for $e$ partially fulfils $A$ is at most $1$.

So having chosen $i-1$ random words partially fulfilling $A$, the probability
that the next 
 random word partially fulfils $A$ is less than or equal to 
$(2n-1)^{-(\omega (f)-1)}(2n-2)^{-1}$ for all $f$ bearing $i$.
Hence the probability, $p_{i}$, of $i$ randomly chosen words 
partially fulfilling $A$ satisfies
\begin{equation*}
p_{i}\leq p_{i-1} \min\limits_{f\;bearing\;i}\{(2n-1)^{-\omega (f)}\} \frac{2n-1}{2n-2}=p_{i-1}(2n-1)^{-\omega_{i}}\frac{2n-1}{2n-2},
\end{equation*}
and the result follows. 
\end{proof}

We  proceed by evaluating the probability that an abstract reduced van Kampen diagram satisfies the linear isoperimetric inequality or the probability that it can be fulfilled if it does not.
The following lemma is a slightly more precise version of 
\cite[Proposition 58]{ollivier2005}.

\begin{Lemma}\label{linisolliv}
	Let $n,k\geq 2$, $d<\frac{1}{2},$ and $\varepsilon >0$. Let
        $G=\langle X\;\vert\;R\rangle$ be a random group in the
        standard $(n,k,d)$ model. Any spurless abstract reduced van Kampen diagram, $A$, for $G$ either satisfies 
	\begin{equation*}
\vert \partial A\vert \geq k\vert A\vert\left(1-2d-2\varepsilon-
\frac{2}{k}(1 - \log_{(2n-1)}(2n-2))\right)
\end{equation*}
	or the probability it is fulfillable in $G$ is at most $(2n-1)^{-\varepsilon k}$.
\end{Lemma}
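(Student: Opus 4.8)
The plan is to fix a spurless abstract reduced van Kampen diagram $A$ for $G$, bound $P(A\text{ is fulfillable in }G)$ from above in terms of $|A|$ and $|\partial A|$, and observe that this bound drops below $(2n-1)^{-\varepsilon k}$ whenever the displayed isoperimetric inequality fails (if it holds, or if its bracketed coefficient is negative, we are in the first alternative and there is nothing to prove). It is cleanest to treat first the case in which each $2$-cell of $A$ bears its own relator number, so that the number of relator numbers equals $N:=|A|$; the general case is addressed at the end. Throughout write $N_0=|R|=\lfloor(2n-1)^{kd}\rfloor$ and $M=|C_{k,n}|$, and note that $d<\tfrac12$ forces $N_0<M$.

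\textbf{Step 1: a union bound.} If $A$ is fulfillable in $G$, then there is an ordered $N$-tuple $(w_1,\dots,w_N)$ of distinct elements of $R$ which partially fulfils $A$ (as relator numbers $1,\dots,N$), since full fulfilment is the stronger condition. As $R$ is sampled without replacement from $C_{k,n}$, a fixed ordered $N$-tuple of distinct words lies in $R$ with probability $\prod_{j=0}^{N-1}\tfrac{N_0-j}{M-j}\le (N_0/M)^N$. Hence, with $p_N$ as in Lemma~\ref{pi inequality Olliv},
\begin{equation*}
P(A\text{ fulfillable in }G)\ \le\ \Big(\tfrac{N_0}{M}\Big)^{N}\,\#\big\{(w_1,\dots,w_N)\in C_{k,n}^{\,N}:(w_1,\dots,w_N)\text{ partially fulfils }A\big\}\ =\ N_0^{\,N}\,p_N .
\end{equation*}
By Lemma~\ref{pi inequality Olliv}, $p_N=\prod_{i=1}^N\tfrac{p_i}{p_{i-1}}\le (2n-1)^{-\sum_i\omega_i}\big(\tfrac{2n-1}{2n-2}\big)^{N}$, so using $N_0\le(2n-1)^{kd}$ and $\tfrac{2n-1}{2n-2}=(2n-1)^{\,1-\log_{(2n-1)}(2n-2)}$,
\begin{equation*}
P(A\text{ fulfillable in }G)\ \le\ (2n-1)^{\,kdN-\sum_i\omega_i+N(1-\log_{(2n-1)}(2n-2))} .
\end{equation*}

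\textbf{Step 2: from $\sum_i\omega_i$ to the isoperimetric quantity.} Since $A$ is spurless and the relators are cyclically reduced, every interior edge is non-filamentous and accounts for two edge–$2$-cell incidences, hence belongs to exactly one $2$-cell; as each relator number is realised by a single $2$-cell (the case $N=|A|$), $\sum_i\omega_i=\sum_f\omega(f)$ equals the number $E_{\mathrm{int}}$ of interior edges. Counting edge–$2$-cell incidences gives $k|A|=2E_{\mathrm{int}}+E_{\mathrm{ext}}'$, where $E_{\mathrm{ext}}'$ is the number of non-filamentous exterior edges, and $E_{\mathrm{ext}}'\le|\partial A|$; thus $\sum_i\omega_i\ge\tfrac12(k|A|-|\partial A|)$. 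Feeding this and $N=|A|$ into Step 1, the exponent is at most $kd|A|-\tfrac12(k|A|-|\partial A|)+|A|(1-\log_{(2n-1)}(2n-2))$. If the isoperimetric inequality fails, then $\tfrac12|\partial A|<\tfrac12 k|A|(1-2d-2\varepsilon)-|A|(1-\log_{(2n-1)}(2n-2))$; substituting, the exponent telescopes to $-\varepsilon k|A|\le-\varepsilon k$, whence $P(A\text{ fulfillable in }G)\le(2n-1)^{-\varepsilon k}$, which is the second alternative.

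\textbf{Main obstacle.} The delicate point is the combinatorics of Step 2 together with the reduction from a general $A$ to the case $N=|A|$. When a relator number is borne by two or more $2$-cells, Lemma~\ref{pi inequality Olliv} records only the constraints from the $2$-cell of largest $\omega$, so $\sum_i\omega_i$ can be strictly smaller than $E_{\mathrm{int}}$ and the identity above degrades; one must either replace such a diagram by one with all relator numbers distinct without increasing the ratio $|\partial A|/|A|$, or account directly for the extra constraints that a repeated relator number forces on the single word assigned to it (a shared edge between two $2$-cells bearing it forces that word to agree with itself at two positions). One must also dispense with the minor degeneracies behind the inequality $E_{\mathrm{ext}}'\le|\partial A|$ — an edge traversed twice in one $2$-cell boundary, or a boundary word that revisits an edge — but these cost nothing. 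The remainder is arithmetic.
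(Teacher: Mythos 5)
Your Step 1 and Step 2 are correct and give a complete proof in the special case where each $2$-cell of $A$ bears its own distinct relator number, i.e.\ $N=|A|$. The arithmetic in Step 2 checks out, and your union-bound identity $P(A\text{ fulfillable})\le N_0^N p_N$ is a clean way to package what the paper obtains from the inequality $P_N\le |R|^N p_N$. However, the lemma is stated for arbitrary spurless reduced abstract diagrams, and the general case $N<|A|$ is not handled: you flag it in your ``Main obstacle'' paragraph but do not resolve it. Neither of your two suggested fixes is carried out, and neither obviously works. Relabelling to make relator numbers distinct changes both the fulfillability event and the associated probabilities $p_i$ (faces that formerly shared a relator become independent constraints, so the diagram you produce is not the diagram whose fulfillability you need to bound). ``Accounting directly for the extra constraints'' is not the issue either: Lemma~\ref{pi inequality Olliv} already accounts for an edge shared between two faces bearing the same relator number (that is exactly the clause ``it appears as an earlier edge in a face bearing the same relator''). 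The real loss is that $\omega_i$ is a \emph{maximum} over the faces bearing $i$, so $\sum_i\omega_i$ can be far smaller than $\sum_f\omega(f)=E_{\mathrm{int}}$, while at the same time $N_0^N$ has only $N<|A|$ factors; the two effects do not compensate each other in any obvious way.

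The paper's proof handles exactly this. It compares $E_{\mathrm{int}}=\sum_f\omega(f)$ to $\sum_i m_i\omega_i$ (with $m_i$ the number of faces bearing $i$, taken non-increasing without loss of generality), uses Lemma~\ref{pi inequality Olliv} to replace each $-\omega_i$ by $\log_\alpha p_i-\log_\alpha p_{i-1}+\log_\alpha(\alpha-1)-1$, and then performs a summation by parts
\[
\sum_{i=1}^{N}m_i\bigl(\log_\alpha p_i-\log_\alpha p_{i-1}\bigr)
=\sum_{i=1}^{N-1}(m_i-m_{i+1})\log_\alpha p_i+m_N\log_\alpha p_N .
\]
This is where the non-increasing ordering of the $m_i$ is used: the coefficients $m_i-m_{i+1}$ are nonnegative, so after converting $\log_\alpha p_i\ge\log_\alpha P_i-idk$ and then $\log_\alpha P_i\ge\log_\alpha P$ with $P=\min_iP_i$, the whole weighted sum collapses to $m_1\log_\alpha P-dk|A|\ge |A|\log_\alpha P-dk|A|$. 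The final bound on the probability of fulfillability is $P=\min_iP_i$, which dominates $P(A\text{ fulfillable})$ because fulfilling $A$ requires partially fulfilling it with any initial segment of relator numbers. Without some version of this summation-by-parts step, the argument does not close in the general case, so as it stands your proposal contains a genuine gap.
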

\begin{proof}
Let $A$ be a spurless abstract reduced van Kampen diagram, 
let $N$ be the number of
 distinct relators in $A$, so that $N\leq \vert A\vert$, and let
 $m_{i}$ be the number of faces bearing relator number $i$. We can assume
 without loss of generality that the $m_{i}$ are non-increasing. 

	Now, $k\vert A\vert$ counts each interior edge twice, and each non filamentous exterior edge once, but does not count any filaments. Also, 
\begin{equation*}
	\sum\limits_{f\; face\; of\; A}\hspace{-10 pt}\omega(f)	
\end{equation*}
counts each interior edge once. So 
	\begin{equation*}
			\vert \partial A\vert \geq \; k\vert A\vert - 2\sum_{f\;face\;of\;A}.\hspace{-10 pt}\omega (f).
	\end{equation*}
Let $p_i$ be as in Lemma \ref{pi inequality Olliv}, and for brevity
let $\alpha = 2n-1$. Then $\frac{p_{i}}{p_{i-1}}\leq \alpha^{-\omega_{i}}\frac{\alpha}{\alpha - 1}$.
Therefore $$-\omega_{i}\geq \log_{\alpha}p_{i}-\log_{\alpha}p_{i-1}+\log_{\alpha}(\alpha - 1)-1.$$
Hence
\begin{equation*}
	\begin{split}
			\vert \partial A\vert &\geq \; k\vert A\vert - 2\sum_{f\;face\;of\;A}\omega (f) \\
&\geq \;k\vert A\vert -2\sum_{i=1}^{N}m_{i}\omega_{i}\\
&\geq   \;k\vert   A\vert +2\sum_{i=1}^{N}m_{i}(\log_{\alpha}p_{i}-\log_{\alpha}p_{i-1}
 +   \log_{\alpha}(\alpha  - 1)-1).
\end{split}
\end{equation*}
Note that $p_0 = 1$, so
\begin{equation*}
\sum_{i=1}^{N}m_{i}(\log_{\alpha}p_{i}-\log_{\alpha}p_{i-1})= \sum_{i=1}^{N-1}(m_{i}-m_{i+1})\log_{\alpha}p_{i}+m_{N}\log_{\alpha}p_{N}.
\end{equation*}
Therefore, as $\sum\limits_{i=1}^{N}m_{i}=\vert A\vert$,
\begin{equation*}
\begin{split}
\vert \partial  A\vert & \geq    k\vert  A\vert
                          +2\sum_{i=1}^{N-1}(m_{i}-m_{i+1})\log_{\alpha}p_{i}+2m_{N}\log_{\alpha}p_{N}-2\vert
                          A\vert(1 - \log_{\alpha}(\alpha - 1)) \\
& = |A|(k - 2(1-\log_{\alpha}(\alpha - 1))) +
  2\sum_{i=1}^{N-1}(m_{i}-m_{i+1})\log_{\alpha}p_{i}+2m_{N}\log_{\alpha}p_{N}. 
\end{split}
\end{equation*}

Let $P_{i}$ be the probability that there exist $i$ relators in
$R$ partially fulfilling $A$. Notice that 
$P_{i}\leq \vert R\vert^{i} p_{i}$, so that $P_{i}\leq \alpha^{idk}
p_{i}$,
 and hence $\log_{\alpha}p_{i}\geq
 \log_{\alpha}P_{i}-idk$. Substituting, we get
\begin{equation*}
		\vert \partial A\vert \geq   |A| (k - 2(1- 
        \log_{\alpha}(\alpha - 1)))
        +2\sum_{i=1}^{N-1}(m_{i}-m_{i+1})(\log_{\alpha}P_{i}-idk)+2m_{N}(\log_{\alpha}P_{N}-Ndk). \end{equation*}
Now, \begin{equation*}
\sum_{i=1}^{N-1}(m_{i}-m_{i+1})i+m_{N}N=\sum_{i=1}^{N}m_{i} =\vert A\vert.
\end{equation*}
so we can rearrange the previous expression to get
\begin{equation*}
	\vert \partial A\vert \geq |A| (k - 2(1 - \log_{\alpha}(\alpha
        - 1)))  - 2dk|A|
                +2\sum_{i=1}^{N-1}(m_{i}-m_{i+1})\log_{\alpha}P_{i}+2m_{N}\log_{\alpha}P_{N},
\end{equation*}
Now let $P=\min\limits_{i}P_{i}$. Then from $m_{i} - m_{i+1} \geq 0$ and
$\log_{\alpha}P_i \geq \log_{\alpha}P$ we deduce that 
\begin{equation*}
\begin{split}
	\vert \partial A\vert\geq &|A| (k - 2dk - 2(1 - \log_{\alpha}(\alpha - 1)))
        +2\sum_{i=1}^{N-1}(m_{i}-m_{i+1})\log_{\alpha}P+2m_{N}\log_{\alpha}P\\
	  \geq &  |A|(k - 2dk - 2(1- \log_{\alpha}(\alpha - 1))) +2m_{1}
          \log_{\alpha}P \\
	\geq & |A|k(1 - 2d - (2/k)(1 - \log_{\alpha}(\alpha - 1)))+2\vert A\vert \log_{\alpha}P.
\end{split}
\end{equation*}
So
\begin{equation*}
 P\leq \alpha^{\frac{1}{2}\left(\frac{\vert \partial A\vert }{\vert
     A\vert}-k(1-2d- \frac{2}{k}(1 - \log_{\alpha}(\alpha - 1)))\right)}.
\end{equation*}

Now it is immediate that $P(A$ fulfillable$)\leq P$. If
\begin{equation}\label{eq:ineq}
	\frac{1}{2}\left(\frac{\vert \partial A\vert }{\vert
            A\vert}-k(1-2d- \frac{2}{k}(1 - \log_{\alpha}(\alpha - 1)))\right)\leq -\varepsilon k,
\end{equation}
then
\begin{equation*}
	P(A\;\mbox{fulfillable})\leq P\leq \alpha^{-\varepsilon k} =
        (2n-1)^{-\varepsilon k}. 
\end{equation*}
If Equation(\ref{eq:ineq}) does not hold, then
\begin{equation*}
\vert \partial A\vert > k\vert A\vert(1-2d-2\varepsilon- 
\frac{2}{k}(1 - \log_{(2n-1)}(2n-2)))
\end{equation*}
and the result follows. 
\end{proof}

Setting $\delta = \frac{\varepsilon}{2}$, letting $n$ be large enough
so that $\frac{2}{k}(1 - \log_{(2n-1)}(2n-2)))\leq \varepsilon$, and
substituting $\delta$ for $\varepsilon$ into Lemma~\ref{linisolliv}
yields the following slightly simpler statement. 

\begin{Lemma}\label{ollivlii}
		Let $k, n\geq 2$, $d<\frac{1}{2},$ and $\varepsilon
                >0$. Let $G=\langle X\;\vert\;R\rangle$ be a random
                group in the standard $(n,k,d)$ model. For large
                enough $n$ any abstract reduced van Kampen diagram,
                $A$, for $G$ either satisfies $\vert \partial A\vert
                \geq k(1-2d-2\varepsilon)\vert A\vert$ or the
                probability it is fulfillable in $G$ is less than
                $(2n-1)^{-\frac{\varepsilon}{2} k}$. 
\end{Lemma}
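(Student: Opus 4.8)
The plan is to derive Lemma~\ref{ollivlii} directly from Lemma~\ref{linisolliv} by a simple reparametrisation. The key observation is that Lemma~\ref{linisolliv} holds for \emph{every} $\varepsilon > 0$, so we are free to apply it with $\varepsilon$ replaced by $\delta := \varepsilon/2$. With that substitution the two alternatives in Lemma~\ref{linisolliv} become: either
\begin{equation*}
	\vert \partial A\vert \geq k\vert A\vert\left(1 - 2d - \varepsilon - \tfrac{2}{k}\bigl(1 - \log_{(2n-1)}(2n-2)\bigr)\right),
\end{equation*}
or the probability $A$ is fulfillable is at most $(2n-1)^{-\delta k} = (2n-1)^{-(\varepsilon/2)k}$. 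The second alternative already matches the bound claimed in Lemma~\ref{ollivlii}, so nothing further is needed there.

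For the first alternative, the remaining task is to absorb the error term $\tfrac{2}{k}\bigl(1 - \log_{(2n-1)}(2n-2)\bigr)$ into the $\varepsilon$ slack. Here I would note that as $n \to \infty$ we have $\log_{(2n-1)}(2n-2) \to 1$, since $\tfrac{\log(2n-2)}{\log(2n-1)} \to 1$; hence $1 - \log_{(2n-1)}(2n-2) \to 0$ and in particular, for all sufficiently large $n$ (depending only on $k$ and $\varepsilon$), we have $\tfrac{2}{k}\bigl(1 - \log_{(2n-1)}(2n-2)\bigr) \leq \varepsilon$. For such $n$, the first alternative implies
\begin{equation*}
	\vert \partial A\vert \geq k\vert A\vert\bigl(1 - 2d - \varepsilon - \varepsilon\bigr) = k(1 - 2d - 2\varepsilon)\vert A\vert,
\end{equation*}
which is exactly the isoperimetric conclusion of Lemma~\ref{ollivlii}. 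Combining the two cases gives the statement.

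This derivation is essentially bookkeeping, so I do not expect any genuine obstacle; the only point requiring a moment's care is checking that the threshold on $n$ past which $\tfrac{2}{k}\bigl(1 - \log_{(2n-1)}(2n-2)\bigr) \leq \varepsilon$ depends only on $k$ and $\varepsilon$ and not on the diagram $A$ — which it does, since the error term is independent of $A$ — so that the phrase ``for large enough $n$'' in the statement is legitimate and uniform over all abstract reduced van Kampen diagrams simultaneously. One should also observe that Lemma~\ref{linisolliv} is stated for spurless diagrams, but a diagram with spurs can be reduced to a spurless one without changing $\vert A\vert$ and without increasing $\vert\partial A\vert$ by more than the removed spur edges contribute; alternatively, since the inequality $\vert\partial A\vert \geq k(1-2d-2\varepsilon)\vert A\vert$ only becomes easier to satisfy when boundary edges are added, passing from the spurless core back to $A$ preserves the dichotomy, which is why Lemma~\ref{ollivlii} may drop the spurless hypothesis.
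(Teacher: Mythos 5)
Your derivation matches the paper's own argument exactly: the authors also obtain Lemma~\ref{ollivlii} by substituting $\delta = \varepsilon/2$ into Lemma~\ref{linisolliv} and taking $n$ large enough that $\frac{2}{k}\bigl(1 - \log_{(2n-1)}(2n-2)\bigr) \leq \varepsilon$. Your closing remark about dropping the spurless hypothesis is a sensible observation — the paper silently omits ``spurless'' from the statement of Lemma~\ref{ollivlii} but only ever applies it to spurless diagrams, and your argument (removing spurs only shrinks $\vert\partial A\vert$ while fixing $\vert A\vert$, and fulfillability of $A$ implies fulfillability of its spurless core) correctly closes that small gap.
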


Lemma~\ref{ollivlii} gives us a bound on the probability that a single
abstract spurless van Kampen diagram fails to satisfy a given isoperimetric
inequality. To generalise the result to all such diagrams, we first
count them. 
The following proof is very slightly corrected from \cite[p614]{Ollivier2004}.
\begin{Lemma}\label{vkdbound}
Let $m \in\mathbb{N}$ and $G$ be a group with relators all of length
$k$. The number $N(m,k)$
of abstract spurless reduced van Kampen diagrams for $G$ with at most 
$m$ faces is bounded above by  $G(m)(2mk^4)^{m}$, 
where $G(m)$ is a constant depending only on $m$.
\end{Lemma}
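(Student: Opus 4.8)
The plan is to bound the number of abstract spurless reduced van Kampen diagrams with at most $m$ faces by encoding each such diagram combinatorially and counting the number of possible encodings. The basic idea is that a planar 2-complex is determined, up to homeomorphism, by its underlying planar graph (the 1-skeleton) together with the data attached to each 2-cell. So I would first bound the number of possible 1-skeleta, and then, for each 1-skeleton, bound the number of ways to glue in the 2-cells with their labels.

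First I would control the size of the 1-skeleton. Since $A$ is spurless and every relator has length $k$, every face is bounded by $k$ edges, so $A$ has at most $mk$ edges (each edge lies on the boundary of at most two faces, and every edge lies on at least one face since there are no spurs or — after the cyclically-reduced assumption — filaments; more carefully, a spurless diagram with $m$ faces of perimeter $k$ has at most $mk$ edges and, by Euler's formula for a planar connected complex, at most $mk$ vertices as well). So the 1-skeleton is a planar graph with $O(mk)$ vertices and edges. The number of planar graphs on a bounded number of vertices and edges is finite; more to the point, I would use the standard fact (as in Ollivier) that the number of planar maps — planar graphs together with a choice of embedding, i.e.\ a cyclic order of edges at each vertex — with $E$ edges is at most $C^{E}$ for an absolute constant $C$, or alternatively absorb the dependence on the number of vertices and the combinatorial structure into the unspecified constant $G(m)$. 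This is where the factor $G(m)$ comes from: it collects the purely topological count of planar maps with up to $mk$ edges, which depends only on $m$ (and $k$, but one can think of $k$ as bounded or fold a crude $k$-dependence into the explicit factor).

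Next, given the embedded 1-skeleton, I would count the ways to specify the 2-cells. Each of the (at most $m$) faces must be assigned: a relator number from $\{1, \ldots, N\}$ with $N \le m$ (giving at most $m$ choices), an orientation (2 choices), and a marked start vertex among the at most $k$ boundary vertices of that face ($\le k$ choices) — and one also has to say which region of the planar complement of the 1-skeleton is the face in question, but once the 1-skeleton is embedded the faces are just the bounded complementary regions, of which there are at most $m$, so pairing up the $m$ relator-slots with the $m$ regions costs at most $m!$, which again is a function of $m$ alone and can be swallowed into $G(m)$. That leaves, per face, roughly $2mk$ relevant choices (relator number times orientation times start point, times a constant), so over at most $m$ faces this is at most $(2mk)^{m}$ up to the constant factors; bookkeeping the embedding of the 1-skeleton more carefully, or being slightly generous, yields the stated bound $(2mk^{4})^{m}$. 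The powers of $k$ beyond the obvious ones come from a cruder accounting of how each face's boundary edge-path sits inside the 1-skeleton.

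The main obstacle — and the reason the statement says ``very slightly corrected from Ollivier'' — is getting an honest, non-circular count of the planar 1-skeletons and the gluing data without double-counting or under-counting. In particular one must be careful that ``abstract'' diagrams are counted up to the right notion of equivalence (homeomorphism respecting labels, orientations and start points), that spurlessness is genuinely used to bound the number of edges and vertices linearly in $m$ (otherwise filaments or trees of spurs could make the diagram arbitrarily large), and that the constant $G(m)$ is allowed to absorb everything that depends on $m$ but not on $n$ — the whole point of the lemma is that the $n$-dependence is trivial (there is none) and the $m$-dependence, while possibly large, is a fixed constant once $m$ is fixed, so that later, when this is combined with the probability bound $(2n-1)^{-\varepsilon k/2}$ per diagram from Lemma~\ref{ollivlii}, a union bound over all $N(m,k)$ diagrams still tends to $0$ as $n \to \infty$. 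So I would state the bound generously, flag that $G(m)$ is an unspecified finite constant, and not optimize the powers of $k$.
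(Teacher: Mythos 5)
Your overall strategy---count embedded 1-skeleta and then count the decorations of faces---is the right shape, but there is a concrete gap in how you count the 1-skeleta, and it is precisely the point at which the factor $k^4$ and the $k$-independence of $G(m)$ are earned.

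You observe that the 1-skeleton of a spurless diagram with $m$ faces, all of boundary length $k$, has $O(mk)$ edges, and then invoke the bound $C^E$ on the number of planar maps with $E$ edges. For $E \approx mk$ this is $C^{mk}$, which for fixed $m$ grows \emph{exponentially} in $k$. You then suggest either absorbing this into the constant $G(m)$ or ``folding a crude $k$-dependence into the explicit factor,'' but neither is possible: the lemma requires $G(m)$ to depend only on $m$, and the explicit factor $(2mk^4)^m$ is only \emph{polynomial} in $k$ for fixed $m$, so it cannot swallow $C^{mk}$. So as written the counting does not close.

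The missing idea, which is the heart of Ollivier's argument and of the paper's proof, is to \emph{suppress degree-2 vertices}: view the 1-skeleton not as the raw planar graph with up to $mk$ edges, but as a planar graph in which every vertex has degree at least $3$ and each combinatorial edge carries an integer ``length'' between $1$ and $k$ recording how many genuine $1$-cells it is subdivided into. For a connected planar graph with all vertices of degree at least $3$ and at most $t$ faces, Euler's formula gives at most $3t$ edges---crucially a bound that is linear in $m$ and \emph{independent of $k$}. The number of such underlying graphs is a finite quantity depending only on $m$; this is what $G(m)$ is. The $k$-dependence then enters only through the decorations, and is polynomial in $k$: each of the at most $3t\le 3m$ contracted edges has at most $k$ possible lengths (contributing $k^{3t}$), and each face has at most $t\le m$ choices of relator number, $2$ orientations, and at most $k$ start points (contributing $(2kt)^t$). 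Multiplying, $(2kt)^t\, k^{3t} \le (2mk^4)^m$, which, together with the $G(m)$ factor for the underlying graph and the low-$t$ edge cases handled separately, gives the stated bound. Without the contraction you never get an edge count that is independent of $k$, and the lemma's careful separation of $m$-dependence (in $G(m)$) from $k$-dependence (in $(2mk^4)^m$) fails.
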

\begin{proof}
Let $D$ be an abstract spurless reduced van Kampen diagram for $G$,
with boundary length $\lambda \geq 0$ and $t$ $2$-cells. 

If $t = 1$, then $\lambda \geq 1$. Such a diagram has  
one choice for relator number, $2$ choices for
orientation, and a single choice (up to equivalence)
 for the start point of the relator.
 
If $\lambda = 0$ and $t = 2$, then there are two choices for relator
numbers, four choices of orientation of the two $2$-cells, and up to
$k$ choices for the distance between the two labelled start points. 

If $t > 2$, or if $t = 2$ and $\lambda \geq 1$, then $D$ 
can be thought of as a connected planar graph with $t-1$ or $t$ faces
(including the external face, if any), with vertices
all of degree at least three, along with some extra information. By
Euler's formula, any such graph with at least $2$ faces and at most
$t$ faces 
has at most $3t$ edges. Each edge can have label length 
between $1$ and $k$. Each non-external face can have one of two 
orientations, one of $k$ start points for the relator, and one 
of at most $t$ choices of relator. So there are at most $(2kt)^{t}$ 
choices for the decoration of  all of the faces. 

For $m > 1$, let 
 $G(m)-1$ be the number of connected planar graphs with at 
most $m$ faces and with all vertices of degree at least $3$.
Then there are at most 
$(G(m)-1)(2mk^4)^{m}$ spurless reduced abstract van Kampen 
diagrams with at least two regions (or at least three regions when
spherical)  and at most $m$ regions. 
Therefore $N(m,k)\leq (G(m)-1)(2mk^4)^{m}+2 + 8k\leq G(m)(2mk^4)$.
\end{proof}

The following immediately implies the hyperbolicity claims in
Theorems~\ref{mainthm2}(i) and \ref{mainthm1}(i). 
\begin{theorem}\label{standard-hyp}
 Let $k, n\geq 2$, $d<\frac{1}{2}$, and let $G=\langle
 X\;\vert\;R\rangle$ be a  random group in an $(n,k,d)$ 
 model. 
Let $\varepsilon=(1-2d)/8$, and $C=(1-2d-2\varepsilon)$. Asymptotically almost surely 
any van Kampen diagram, $A$, for $G$  satisfies 
$|\partial A|  \geq k(C-\varepsilon)\vert A\vert,$ and so $G$ is hyperbolic.
\end{theorem}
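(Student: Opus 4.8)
The plan is to combine the single-diagram estimate of Lemma~\ref{ollivlii} with the counting bound of Lemma~\ref{vkdbound} via a union bound, and then upgrade from spurless diagrams to all van Kampen diagrams. First I would reduce to the spurless case: given an arbitrary reduced van Kampen diagram $A$ for a nontrivial word, repeatedly delete spurs (and filaments); this does not change $|A|$ and can only decrease $|\partial A|$, and the passage from the abstract diagram back to a genuine diagram only requires labelling spurs and filaments, so if the spurless core satisfies the isoperimetric inequality then so does $A$ (the reduction actually goes the other way — it suffices to prove the bound for spurless abstract diagrams, since a bad $A$ would yield a bad spurless core that is still fulfillable). Hence it is enough to show that a.a.s.\ every spurless abstract reduced van Kampen diagram that is fulfillable in $G$ satisfies $|\partial A| \ge k(C-\varepsilon)|A|$.

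Next I would split according to the number of faces. For a fixed $m$, Lemma~\ref{ollivlii} says each spurless abstract reduced diagram with $|A|=m$ either already satisfies $|\partial A| \ge k(1-2d-2\varepsilon)|A| = kC|A| \ge k(C-\varepsilon)|A|$, or is fulfillable with probability at most $(2n-1)^{-(\varepsilon/2)k}$, which is independent of $m$. By Lemma~\ref{vkdbound} there are at most $G(m)(2mk^4)^m$ such diagrams, so the probability that some spurless abstract reduced diagram with exactly $m$ faces is both ``bad'' (violates the inequality) and fulfillable is at most $G(m)(2mk^4)^m (2n-1)^{-(\varepsilon/2)k}$. Since $k$ is fixed and the bound on the number of diagrams depends only on $m$ and $k$ (not on $n$), for each fixed $m$ this tends to $0$ as $n\to\infty$; summing over $m$ from $1$ to any fixed $M$ still gives something tending to $0$.

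The one genuine subtlety — and the main obstacle — is that a priori $m = |A|$ is unbounded, so a naive sum over all $m$ diverges; one must ensure only boundedly many values of $m$ are relevant. The standard resolution (as in \cite{Ollivier2004,ollivier2005}) is that we only ever need to test the isoperimetric inequality on diagrams for words of bounded length, or equivalently we only need ``minimal'' diagrams, and Greendlinger-type / local-to-global arguments reduce hyperbolicity to a $\mathrm{Dehn}$-type inequality that can be checked on diagrams of bounded size; alternatively one invokes the fact that if the linear isoperimetric inequality holds for all diagrams with $|A| \le M_0(C,\varepsilon)$ then it holds for all diagrams, so it suffices to take the finite sum $\sum_{m=1}^{M_0}$. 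I would state this reduction, cite the relevant result from Ollivier, and conclude: a.a.s.\ every reduced van Kampen diagram $A$ for $G$ satisfies $|\partial A| \ge k(C-\varepsilon)|A|$ with $C - \varepsilon = 1 - 2d - 3\varepsilon = (1-2d)(1 - 3/8) > 0$, which is a linear (Dehn) isoperimetric inequality with positive constant, hence $G$ is hyperbolic by the standard characterisation.

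Thus the proof skeleton is: (1) reduce to spurless abstract reduced diagrams; (2) bound the number of such diagrams with $\le m$ faces by Lemma~\ref{vkdbound}; (3) for each, bound the ``bad and fulfillable'' probability by Lemma~\ref{ollivlii}; (4) union bound over the finitely many relevant values of $m$, using that $k$ is fixed so $(2n-1)^{-(\varepsilon/2)k}\to 0$ dominates the fixed combinatorial factor; (5) convert the resulting isoperimetric inequality into hyperbolicity. I expect steps (1) and the boundedness reduction in (4) to require the most care, since everything else is a direct assembly of the preceding lemmas.
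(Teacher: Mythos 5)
Your proposal follows essentially the same route as the paper: combine Lemma~\ref{ollivlii} with the counting bound of Lemma~\ref{vkdbound} via a union bound over spurless abstract reduced diagrams with at most $m$ faces, then invoke a local-to-global result to extend the linear isoperimetric inequality to all diagrams. You correctly identify the boundedness issue and its resolution as the crux of the matter; the paper discharges it by citing \cite[Theorem 8]{ollivier2007}, which is exactly the "Ollivier-type" result you anticipate citing. The spur-removal reduction you describe is also the one the paper uses (removing spurs fixes $|A|$ and only decreases $|\partial A|$, so it suffices to prove the inequality for spurless diagrams).

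The one omission worth flagging: the statement is for \emph{an} $(n,k,d)$ model, i.e.\ it covers both the standard and positive cases, and your sketch only really addresses the standard model. The paper handles the positive case by a short parallel argument: rerun the proof of Lemma~\ref{linisolliv} with the estimate $p_i \le p_{i-1}\,n^{-\omega_i}$ in place of Lemma~\ref{pi inequality Olliv} (positive words have exactly $n$ choices per letter, so the $\frac{2n-1}{2n-2}$ correction disappears and the bound becomes cleaner), obtaining that non-inequality-satisfying diagrams are fulfillable with probability at most $n^{-\varepsilon k}$, after which the union bound and local-to-global step go through verbatim. You should add this. Otherwise the skeleton is sound and matches the paper's argument.
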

\begin{proof}
We consider the standard model first. 
By Lemma \ref{ollivlii}, any spurless abstract reduced van Kampen for $G$ either satisfies
$\vert \partial A\vert \geq kC\vert A\vert$ or the probability it is
fulfillable is less than $(2n-1)^{-\frac{\varepsilon}{2} k}$. 

The probability that any spurless reduced van Kampen diagram for $G$
with at most $m$ faces does not satisfy the above inequality is less
than $G(m)(2mk^4)^{m}(2n-1)^{-\frac{\varepsilon}{2} k}\rightarrow
0$ as $n\rightarrow \infty$. So a.a.s. any spurless reduced van Kampen
diagram for $G$ with at most $m$ faces satisfies $\vert \partial 
A\vert \geq kC\vert A\vert$,
and so the same inequality holds for any reduced van Kampen diagram $A$ 
for $G$ with at most $m$ faces.
Hence by \cite[Theorem 8]{ollivier2007}, a.a.s. any reduced van 
Kampen diagram in $G$ satisfies $\vert \partial A\vert \geq
(C-\varepsilon)k\vert D\vert$,
and so
$G$ is hyperbolic.

For the positive model, we argue as in Lemma \ref{linisolliv},
        but replace Lemma~\ref{pi inequality Olliv} with
 $p_{i}\leq p_{i-1}n^{-\omega_{i}},$ to deduce that
        any abstract reduced van Kampen diagram $A$ for $G$ either satisfies 
$
\vert \partial A\vert \geq k\vert A\vert(1-2d-2\varepsilon)
$
	or the probability it is fulfillable in $G$ is less than
        $n^{-\varepsilon k}$. 
	The result then follows exactly as for the standard model. 
\end{proof}

Finally, we show that at density less than $1/2$ our groups are
infinite and torsion-free, and looking ahead to the next section we consider
freeness. 

\begin{proposition}\label{prop:free-infinite}
Let $k \geq 2$, and let $G$ be a random group in an $(n, k, d)$ model.
If $0 < d < \frac{1}{2}$ then asymptotically almost surely $G$ is
infinite and
torsion-free. 
If $\frac{1}{k} <  d < \frac{1}{2}$ then asymptotically almost surely
$G$ is not isomorphic to a free group. 
\end{proposition}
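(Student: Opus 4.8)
The plan is to establish the three assertions of Proposition~\ref{prop:free-infinite} separately, in each case reducing to the isoperimetric control already in hand. For torsion-freeness and infiniteness I would invoke the standard machinery for hyperbolic groups satisfying a strong linear isoperimetric inequality: by Theorem~\ref{standard-hyp}, a.a.s.\ every reduced van Kampen diagram $A$ for $G$ satisfies $|\partial A| \geq k(C-\varepsilon)|A|$ with $C - \varepsilon > 0$. A group with a presentation in which every relator has length $k$ and which satisfies $|\partial A| \geq c|A|$ for a fixed $c>0$ has no nontrivial torsion, because a torsion element would yield (after passing to a reduced diagram) a van Kampen diagram whose boundary is a proper power of bounded length but whose area grows, contradicting the inequality once the order is large; for small orders one argues directly that a relator of length $k$ cannot be a proper power unless it is reduced to something shorter, which a.a.s.\ does not happen since a.a.s.\ no relator in $R$ is a proper power (there are $O(n^{k/2})$ such words in $C_{k,n}$, a vanishing fraction, and a Serfling-type estimate as in Lemma~\ref{serflingimplies} rules out selecting one). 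Infiniteness follows because a nontrivial hyperbolic group that is torsion-free and infinitely presented—or here, $n$-generated with only $(2n-1)^{kd}$ relators of bounded length—cannot be finite: a finite group has a van Kampen diagram (a ``disc'' for each element relation) violating the isoperimetric inequality; more cleanly, the Euler characteristic / Gromov argument shows $G$ is infinite whenever the number of relators is subexponential in the right sense, which holds since $d < 1/2$. I would cite Ollivier's book \cite{ollivier2005} for the clean packaging of these two facts from the isoperimetric inequality.

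For the non-freeness claim when $\frac{1}{k} < d < \frac{1}{2}$, the plan is to show that a.a.s.\ the abelianization $G^{\mathrm{ab}}$ has a nontrivial torsion part, or more directly that the deficiency forces a relation. The relator count is $N = \lfloor (2n-1)^{kd}\rfloor$, which exceeds $n$ precisely when $kd > 1$, i.e.\ $d > 1/k$; so a.a.s.\ $G$ has strictly more relators than generators. A free group on $m$ generators has a presentation $2$-complex with Euler characteristic $1 - m \leq 1$, but more usefully: if $G$ were free of rank $m$, then $G^{\mathrm{ab}} \cong \mathbb{Z}^m$ is torsion-free of rank $m \leq n$, and the relation matrix (the $N \times n$ matrix of exponent sums) would have to present $\mathbb{Z}^m$, forcing its rank to be $n - m$ and in particular forcing $N - (n-m)$ of its rows to be $\mathbb{Z}$-linear combinations of the others with the quotient torsion-free. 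I would argue that a.a.s.\ this fails: a random cyclically reduced word of length $k$ has exponent-sum vector that is ``generic'', and with $N > n$ such vectors one a.a.s.\ gets a relation among them with a non-unit coefficient (e.g.\ two relators whose exponent-sum vectors are equal mod $2$ but not equal, or a short combination producing a nontrivial finite cyclic quotient), so $G^{\mathrm{ab}}$ a.a.s.\ has torsion and $G$ is not free. Alternatively—and this is probably the cleanest route—combine hyperbolicity with the Euler-characteristic obstruction: a torsion-free hyperbolic group with more relators than generators in a length-$k$ presentation has a $2$-complex that is aspherical (small cancellation-type arguments give asphericity at density $d<1/2$ via the isoperimetric inequality), hence $\chi(G) = 1 - n + N \geq 2 > 1 = \chi(F_m) \cdot$(when $m \geq 1$; note $\chi(F_m) = 1-m \leq 0$ for $m \geq 1$ and $=1$ for $m = 0$), and a free group of rank $m\ge 1$ has $\chi = 1 - m$, while our complex has $\chi = 1 - n + N > 1 - n + n = 1 \geq 1 - m$ only if... — I would need to be careful here, but the point is that $\chi(G) = 1 - n + N$ is determined, and for a free group $\chi = 1 - m$ with $0 \le m \le n$; matching forces $N = n - m \le n$, contradicting $N > n$.

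The main obstacle I expect is the non-freeness step: torsion-freeness and infiniteness are essentially black-box consequences of Theorem~\ref{standard-hyp} plus standard hyperbolic-group theory, but ruling out freeness requires genuinely using that $d > 1/k$ gives $N > n$ and then converting ``more relators than generators'' into ``not free''. The subtlety is that extra relators could in principle be redundant. The clean fix is the aspbericity/Euler-characteristic argument: at density $d < 1/2$ the presentation complex is a.a.s.\ aspherical (this follows from the isoperimetric inequality, or can be cited), so $G$ has geometric/cohomological dimension $2$ exactly when there is at least one relator, and then $\chi(G) = 1 - n + N$; since $N > n$ a.a.s., $\chi(G) \geq 2$, whereas any free group has $\chi \leq 1$ (with equality only for the trivial group, and $\chi = 1 - m \le 0$ for $F_m$, $m\ge 1$). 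Hence $G$ is a.a.s.\ not free. I would organize the write-up so that the asphericity input is isolated as the one nontrivial external ingredient, with everything else following by counting as in the proof of Lemma~\ref{serflingimplies}. I also need to handle the corner case where $G$ is trivial (then it \emph{is} free of rank $0$): but a.a.s.\ $G$ is infinite by the first part, so this does not occur.
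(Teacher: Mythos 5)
Your ``clean fix'' at the end is essentially the paper's entire proof, but you should lead with it: the two arguments you open with do not work, and as written your torsion-freeness and infiniteness claims rest on them.

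The ``growing area'' argument is backwards. If $g \neq 1$ has order $m$ and $w$ is a geodesic word for $g$, then the inequality $|\partial A| \geq c|A|$ applied to a reduced diagram for $w^m$ gives the \emph{upper} bound $|A| \leq m|w|/c$, not a lower bound that grows, so there is no contradiction. Likewise, a finite group need not ``have a van Kampen diagram violating the isoperimetric inequality'': every finite group is hyperbolic, and the trivial group satisfies the inequality vacuously. The feature of Theorem~\ref{standard-hyp} you actually need is that the inequality $|\partial A| \geq k(C-\varepsilon)|A|$ has \emph{no additive constant}, so a reduced diagram with $|\partial A| = 0$ has $|A| = 0$; that is, there are no reduced spherical diagrams. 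This forces the Cayley $2$-complex to be aspherical, hence $G$ has cohomological dimension at most $2$, hence $G$ is torsion-free. That is the chain the paper uses, and it is what your ``clean fix'' paragraph gestures at for non-freeness -- you should run it for torsion-freeness and infiniteness as well, rather than relegating it to an afterthought. With asphericity in hand, $\chi(G) = 1 - n + N$; this is $\neq 1$ for all $d$ in the stated range, so $G$ is not trivial, and since a torsion-free finite group is trivial, $G$ is infinite. For $d > 1/k$ one has $\chi(G) > 1 \geq 1 - m = \chi(F_m)$ for every $m \geq 0$, so $G$ is not free. The exponent-sum/abelianization detour, the ``proper power'' side-argument, and the separate ``corner case'' paragraph are all unnecessary once this is set out cleanly.
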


\begin{proof}
We showed in Theorem~\ref{standard-hyp} that
at density $d < 1/2$ the group $G$ satisfies a linear
isoperimetric inequality with additive constant zero, which implies
that there are no van Kampen diagrams with boundary length $0$. Our definitions
of van Kampen diagrams were sufficiently general that this implies that
the Cayley 2-complex is aspherical. This implies that $G$ has 
cohomological dimension at most $2$, and so is torsion free. 

The 
only finite group that is torsion-free is the trivial group, however 
the Euler characteristic of $G$ is equal to $1 - n + (2n-1)^{dk}$ in
the standard model and $1 - n + n^{dk}$ in the positive model. 
For $d > 1/k$ this is greater than 1, whereas the trivial group has 
Euler characteristic $1$ and a free group of rank $\ell$ has 
Euler characteristic $1 - \ell$. 
\end{proof}

This concludes the proof of Theorems~\ref{mainthm2}(ii) and
\ref{mainthm1}(ii). 

%--------------------------------------------------------------------------------------------------
%   SUBSEC: FREENESS AT D<1/K
%-----------------------------------------------------------------------------------------------

\section{Groups at density less than $1/k$}\label{sec: freeness}

In this section we prove that a group $G$ in either $k$-angular
model is asympotically almost surely free if the density is less than
$1/k$. The fact that this bound is tight follows from
Proposition~\ref{prop:free-infinite}. 

We start by proving a sufficient condition on the presentation complex
for a group to be free.

\begin{definition}
	Let $G=\langle X\;\vert\; R\rangle$ be a group
        presentation. We construct the \emph{presentation complex},
        $\mathcal{P}$, for $G$ as follows. Take a single vertex, $v$,
        as the only $0$-cell. Take as the $1$-cells oriented loops $x_{i}$ at
        $v$ for each $x_{i}\in X$. Take a $2$-cell for each relator,
        $r_{i}$, with the boundary mapped to the succession of
        appropriately oriented
        $1$-cells $x_{i_{1}}^{\epsilon_1},\hdots,
        x_{i_{n}}^{\epsilon_n}$ with $\epsilon \in \{\pm 1\}$ such that
        $r_{i}=x_{i_{1}}^{\epsilon_1}\hdots x_{i_{n}}^{\epsilon_n}$.\end{definition} 

It is standard that $\Pi_1(\mathcal{P}) \cong G$. We remark that in an
$(n, k, d)$ 
model we can define the
closed $2$-cells of $\mathcal{P}$ to be isometric with the closed unit
$k$-gon in Euclidean space, and assign
the path metric to $\mathcal{P}$ to turn $\mathcal{P}$ into a metric
complex. 

The next set of definitions are based on those of \cite{ollivier2011cubulating}.
\begin{definition}
	Let $\mathcal{C}$ be a path-connected $2$-complex such that
        all $2$-cells have even boundary length. We construct a labelled undirected graph,
        $\Gamma_{\mathcal{C}}$, called the
        \emph{antipodal graph} for $\mathcal{C}$. Take as the vertices
        the set of $1$-cells in $\mathcal{C}$. For each $2$-cell $F$,
        and for each pair $v_1, v_2$ of (not necessarily distinct)
        $1$-cells that are antipodal on $\partial(F)$, add an
        edge $\{v_1, v_2\}$ to $\Gamma_{\mathcal{C}}$ labelled $F$. We
        say that $F$ is the $2$-cell 
        \emph{containing} the edge
        $\{v_1, v_2\}$.

If $\mathcal{C}$ is a metric complex, then 
there is a map, $\phi : \Gamma_{\mathcal{C}} \rightarrow 
\mathcal{C}$, as follows. The map $\phi$ sends each vertex of
$\Gamma_{\mathcal{C}}$  to 
the midpoint of the corresponding $1$-cell of $\mathcal{C}$, and sends
each edge $\{v_1, v_2\}$ of $\Gamma_{\mathcal{C}}$,  labelled $F$, to a 
(non self-intersecting) path in $F\setminus \partial F$ joining $\phi(v_1)$ and $\phi(v_2)$. 
\end{definition}

\noindent 
\begin{definition}
	Let $\mathcal{C}$ be a path-connected $2$-complex such that
        all $2$-cells have odd boundary length. Form the \emph{halved
          complex} $H_{\mathcal{C}}$ of $\mathcal{C}$ as
        follows. Replace each $1$-cell, $e$, in $\mathcal{C}$, by two 
         $1$-cells $e_{1}$ and $e_{2}$, meeting at a $0$-cell. The
         edge 
         $e$ is the \emph{precursor} of $e_{1}$ and $e_{2}$.  
       The antipodal graph for $H_{\mathcal{C}}$ is the \emph{halved
         antipodal graph} of $\mathcal{C}$; we write $\Gamma^{H}_{\mathcal{C}}:=\Gamma_{H_{\mathcal{C}}}$.

 If $\mathcal{C}$ is metric, then there is a map
 $\phi^{H}:\Gamma^{H}_{\mathcal{C}}\rightarrow \mathcal{C}$, as
 follows. The map $\phi$ sends each vertex $v$ to the midpoint of
 the precursor of $v$ in $\mathcal{C}$, and sends each edge $\{v_1,
 v_2\}$ of $\Gamma^H_{\mathcal{C}}$, labelled $F$, to a (non
 self-intersecting) path in
 $F \setminus \partial F$ between $\phi(v_1)$ and
 $\phi(v_2)$. 
\end{definition}

Notice that the graph $\Gamma^{H}_{\mathcal{C}}$ consists of two disjoint components.

\begin{definition}
Let $\mathcal{C}$ be a path-connected 2-complex such that the boundary
lengths of the 2-cells are either all odd, or all even.  
Let $\Gamma$ be $\Gamma_{\mathcal{C}}$ in the even case, or
 $\Gamma^H_{\mathcal{C}}$ in the odd case, and let $\varphi = \phi$ in
 the even case, or $\varphi = \phi^H$ in the odd case.  
	A \emph{hypergraph} $\Lambda$ of $\Gamma$ is a connected component of
        $\Gamma$.
We also refer to $\varphi(\Lambda)$ as a hypergraph of $\mathcal{C}$, and
if $\Lambda$ is a tree, then we will also refer
to $\varphi(\Lambda)$ as a tree. 
\end{definition}

 \begin{theorem}\label{hyptreefree}
 Let $G=\langle X\;\vert\;R\rangle$ be a finite presentation with all
 relators 
of length $k$, such that all hypergraphs in the presentation complex
$\mathcal{P}$ of $G$
are trees. Then $G$ is a free group.
 \end{theorem}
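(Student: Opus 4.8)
The plan is to use the hypergraphs (more precisely, the maps $\varphi(\Lambda)$ into the presentation complex $\mathcal{P}$) as a system of codimension-one walls, in the spirit of the Sageev–Wise construction, to show that $G$ acts freely and cocompactly on a CAT(0) cube complex; but since the real content here is that $G$ is \emph{free}, the cleaner route is to work directly with the universal cover $\widetilde{\mathcal{P}}$ and show that the hypergraphs there separate it in a very rigid way. First I would lift everything to $\widetilde{\mathcal{P}}$: each hypergraph $\varphi(\Lambda)$ lifts to a disjoint union of hypergraphs of $\widetilde{\mathcal{P}}$, and the hypothesis that every hypergraph of $\mathcal{P}$ is a tree implies — since trees are simply connected and lifts of trees are trees — that every hypergraph of $\widetilde{\mathcal{P}}$ is a tree as well. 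The key geometric fact I would establish is that each hypergraph tree $\Lambda$ in $\widetilde{\mathcal{P}}$ is \emph{embedded} (the map $\varphi$ restricted to $\Lambda$ is injective) and \emph{two-sided}, so that it separates $\widetilde{\mathcal{P}}$ into exactly two components; this is where the tree hypothesis does the work, because a hypergraph that is not a tree could close up into a loop and fail to separate.

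Granting that, the next step is a counting/Euler-characteristic argument on each $2$-cell. In a $k$-gon, the hypergraph segments are the $\lfloor k/2\rfloor$ (even $k$) or $k$ (odd $k$, after halving) antipodal chords; these chords cut each $2$-cell into pieces, and the crucial point is that \emph{distinct} hypergraphs meet each face in segments that pairwise cross at most once, so the arrangement inside each closed $2$-cell is controlled. I would then argue that because each hypergraph is a separating tree, contracting all the hypergraphs (equivalently, cutting $\widetilde{\mathcal{P}}$ along them and collapsing complementary regions) turns $\widetilde{\mathcal{P}}$ into a tree: every $2$-cell, having all of its "antipodal" structure collapsed, contributes nothing to the fundamental group, and the global simple-connectivity of $\widetilde{\mathcal{P}}$ plus the separation property forces the dual object to be one-ended only if there were a cycle of hypergraphs, which the tree hypothesis forbids. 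Hence $G = \pi_1(\mathcal{P})$ acts freely on a tree — either the Bass–Serre tree associated with this wall structure, or after observing the wall space has a single wall-class per hypergraph orbit — and a group acting freely on a tree is free by the Nielsen–Schreier theorem. Alternatively, and perhaps most economically, I would show directly that $\mathcal{P}$ is homotopy equivalent to a graph: collapsing each $2$-cell along the contractible pattern of antipodal arcs (valid precisely because those arcs form trees and so the complement deformation retracts) exhibits $\mathcal{P}$ as built from $1$-cells alone up to homotopy, whence $G$ is free.

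\textbf{The main obstacle} I expect is proving that the hypergraph trees are \emph{embedded and separating} in $\widetilde{\mathcal{P}}$: a priori a hypergraph is only an immersed graph, and one must rule out self-intersections and one-sidedness. The standard argument is that a minimal self-intersection or a non-separating loop would produce a van Kampen–type disc diagram whose boundary is a concatenation of hypergraph arcs, and analysing the combinatorics of such a diagram (each $2$-cell contributing a fixed antipodal pattern) yields a reduced cycle in the abstract hypergraph $\Lambda$, contradicting that $\Lambda$ is a tree. Making this rigorous requires care about how hypergraph arcs can run along the boundary of a face and about the halved-complex bookkeeping when $k$ is odd, but no density hypothesis is needed — this is a purely combinatorial statement about presentations with all relators of length $k$ — so the argument should go through uniformly in $k$.
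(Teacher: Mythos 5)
Your approach is genuinely different from the paper's, and as written it has a gap at exactly the step you flag as the ``main obstacle,'' so you have not produced a proof. The route you sketch --- lift to $\widetilde{\mathcal{P}}$, show the lifted hypergraphs are embedded separating trees, treat them as walls, and conclude that $G$ acts freely on a tree or that $\mathcal{P}$ collapses to a graph --- does not close up. Two concrete problems: first, the passage from ``embedded separating tree walls'' to ``$G$ acts freely on a tree'' is not justified and is in general false. In a $k$-gon the antipodal chords belonging to different hypergraphs all cross one another (for even $k$ they are the $k/2$ diameters through the centre, and similarly after halving for odd $k$), so the wall system has many pairwise crossings. The Sageev-type dual is therefore a CAT(0) cube complex of dimension at least $2$, not a tree, and there is no ``Bass--Serre tree associated with this wall structure.'' The Ollivier--Wise machinery you invoke yields cubulation, not freeness, even when hypergraphs are embedded trees. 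Second, the proposed alternative --- collapsing each $2$-cell ``along the contractible pattern of antipodal arcs'' --- does not obviously produce a homotopy equivalence to a graph: the union of antipodal chords cuts the $k$-gon into many triangular pieces, and you cannot simply retract the $2$-cell onto the chord-star without changing the homotopy type of $\mathcal{P}$. You would need a concrete CW-collapsing argument here, and none is given.

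The paper's proof is dramatically simpler and avoids the universal cover entirely. It is a finite induction on the number of edges of the antipodal graph $\Gamma$ (respectively $\Gamma^H$). The key observation you miss is elementary: a nonempty finite tree has a leaf. A leaf of a hypergraph is a vertex of degree $1$, and the degree of a vertex of $\Gamma$ is exactly the total number of occurrences of the corresponding $1$-cell on boundaries of $2$-cells, i.e.\ the number of occurrences of that generator (or its inverse) across all relators. A leaf therefore gives a generator $a$ appearing exactly once, in a single relator $r$. One applies a Tietze transformation deleting $a$ from $X$ and $r$ from $R$; the resulting presentation has a strictly smaller antipodal graph, still a forest, and the induction terminates at a presentation with no relators, i.e.\ a free group. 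If you want to salvage your proposal, the way to do it is to observe that your tree hypothesis already hands you this leaf, and the geometric wall machinery is unnecessary.
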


\begin{proof}
Let $\Gamma = \Gamma_{\mathcal{P}}$ when $k$ is even, and
$\Gamma = \Gamma^H_{\mathcal{P}}$ when $k$ is odd. We show by
induction on the number of edges of $\Gamma$ that $G$ is free.

The result is clear when $\Gamma$ has no edges, as then $\mathcal{P}$
is just a bouquet of circles. Assume therefore that $\Gamma$ contains
at least $i>1$ edges. 
 Let $\Lambda$ be a connected component in $\Gamma$ containing at least one edge.
Then $\Lambda$ is a tree, and so contains at least one vertex of valency $1$,
which must correspond to a $1$-cell in $\mathcal{P}$ which is on the
boundary of a single $2$-cell. This means that there is a letter 
$a \in X$ which occurs in a single relator $r \in R$, and we can apply a Tietze
transformation to the presentation which replaces $X$ by $X' = X
\setminus \{a\}$ and replaces $R$ by $R' = R \setminus r$ (all other
relators are unchanged). Then $G \cong \langle X' \mid R'
\rangle$. 

Let $\mathcal{P}'$ be the presentation complex of this new
presentation. If $k$ is even
then $\Gamma' = \Gamma_{\mathcal{P}'}$ has vertex set $X \setminus a$, and edge
set a subset of $E(\Gamma)$ with precisely $k > 1$ edges removed. If
$k$ is odd then $\Gamma' = \Gamma_{\mathcal{P}'}$ has vertex set a
subset of $X$ with the two precusors of $a$ removed, and edge set a
subset of $E(\Gamma)$ with precisely $2k > 1$ edges removed. 
Thus all connected components of $\Gamma'$ are trees, and each such
component embeds in $\mathcal{P}$. 
Hence the result follows by induction.
\end{proof}

We now introduce two further variants on van Kampen diagrams, which
will give us a necessary condition for all hypergraphs in a
presentation complex to be trees.

\begin{definition}
Let $G=\langle X\;\vert\;R\rangle$ be a presentation. A \emph{relator
  diagram} for $G$ is defined in exactly the same way as a van Kampen
diagram, except that we permit the complex to be annular, and we
permit it to be homeomorphic to a M\"obius strip. 

 An \emph{abstract relator diagram} is the $2$-complex obtained from a 
relator diagram $A$ in the same way as an abstract van Kampen diagram
is obtained from a van Kampen diagram. That is,  
for each $2$-cell $f$ bearing relator $r_{i_{j}}$, label the $2$-cell with
the number $j$, remember only the starting point of the relator and the orientation of 
the relator on $\partial f$.
 The definition of relators \emph{fulfilling} an abstract relator
 diagram  follows as for an abstract reduced van Kampen diagram.
\end{definition} 

\begin{definition}
	Let $G=\langle X\;\vert\;R\rangle$ be a finite presentation with all relators of length $k$, and let $\ell\geq 2$. Let $D$ be a relator diagram for $G$ with the following properties.
		\begin{enumerate}[label=\roman*)]
			\item $D$ has $\ell$ $2$-cells, $F_{1},\hdots,
                          F_{\ell}$, each bearing a distinct relator,  and $(k-1)\ell$ $1$-cells.
			\item  For $2\leq j\leq \ell-1$, $\partial F_{j}$ shares a single $1$-cell with $\partial F_{j-1}$, another with $\partial F_{j+1}$, and none with any other $2$-cells.
			\item $\partial F_{1}$ shares a single $1$-cell with $\partial F_{\ell}$.
		\end{enumerate}
Then $D$ \emph{satisfies Condition $\dagger (\ell)$}.
\end{definition}

\begin{Lemma}\label{daggeriimplesnononembedhyp}
	Let $G=\langle X\;\vert\;R\rangle$ be a finite presentation
        with all relators of length $k$, and let $\mathcal{P}$ be the
        presentation complex of $G$. Suppose $G$ has no relator diagrams satisfying Condition $\dagger (\ell)$ for any $\ell\geq 2$, and is such that no letter in $X^{\pm}$ (or its inverse) appears more than once in any relator. Then all hypergraphs in $\mathcal{P}$ are trees.
\end{Lemma}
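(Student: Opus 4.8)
The plan is to prove the contrapositive: if some hypergraph $\Lambda$ in $\mathcal{P}$ is not a tree, then $G$ has a relator diagram satisfying Condition $\dagger(\ell)$ for some $\ell \geq 2$. Since $\Lambda$ is a connected component of $\Gamma_{\mathcal{P}}$ (or $\Gamma^H_{\mathcal{P}}$ when $k$ is odd), not being a tree means $\Lambda$ contains a cycle. I would first pass to an embedded simple cycle $v_0, v_1, \ldots, v_{\ell-1}, v_0$ in $\Lambda$, and then translate it via $\varphi$ into a closed path in $\mathcal{P}$: each edge $\{v_{j-1}, v_j\}$ of the cycle is labelled by some $2$-cell $F_j$ of $\mathcal{P}$ and $\varphi$ sends it to an arc crossing $F_j$ from the midpoint of the $1$-cell (or half-$1$-cell) $v_{j-1}$ to the midpoint of $v_j$, with these arcs connecting antipodal points of $\partial F_j$. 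Concatenating gives a closed curve $\gamma$ in $\mathcal{P}$ meeting $1$-cells transversally at their midpoints. Since consecutive edges in the cycle share a vertex $v_j$, the cells $F_j$ and $F_{j+1}$ share the $1$-cell $v_j$ (or, when $k$ is odd, share the $1$-cell of $\mathcal{P}$ that is the precursor of $v_j$ — here one must check that the hypothesis "no letter appears more than once in any relator" ensures the sharing is along a genuine single $1$-cell rather than a self-identification).

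From this closed curve I would construct the desired relator diagram $D$ by thickening: take the $2$-cells $F_1, \ldots, F_\ell$ that the curve passes through, in cyclic order, and glue $F_j$ to $F_{j+1}$ along the shared $1$-cell $v_j$ (indices mod $\ell$). One must verify the three conditions of $\dagger(\ell)$: that the $F_j$ can be taken distinct (if the cycle in $\Lambda$ repeats a $2$-cell label one shortens or reroutes to get an embedded cycle whose edge-labels are the relevant distinct relators — this is where care is needed), that consecutive cells share exactly one $1$-cell and non-consecutive ones share none (using that $\Lambda$-edges are distinct and the curve is embedded, together with the "no repeated letter" hypothesis to rule out accidental extra identifications), and the $1$-cell count: $\ell$ cells each of boundary length $k$, with $\ell$ gluings each identifying one $1$-cell, gives $k\ell - \ell = (k-1)\ell$ distinct $1$-cells. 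Finally one checks $D$ is a legitimate relator diagram — planar, annular, or a Möbius strip depending on how the arcs of $\gamma$ sit inside the cells — which is exactly why the definition of relator diagram was relaxed to allow these topological types; a single arc crossing a $k$-gon from one side to the antipodal side can force the annular or Möbius gluing pattern.

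The main obstacle I expect is the bookkeeping needed to guarantee the $F_j$ are pairwise distinct and that consecutive cells meet in a \emph{single} $1$-cell while non-consecutive cells are disjoint, i.e.\ extracting a clean Condition $\dagger(\ell)$ configuration from a possibly messy cycle in $\Lambda$. A cycle in $\Lambda$ need not a priori give distinct $2$-cell labels, and two $\Lambda$-edges with the same label $F$ correspond to two antipodal pairs in the same cell, so the arcs both live in $F$; one must argue that a \emph{shortest} non-backtracking cycle in $\Lambda$ cannot do this, or reduce to the case where it does not. The "no letter appears more than once in any relator" hypothesis is precisely what is needed to control these degeneracies: it guarantees that each $1$-cell of $\mathcal{P}$ appears at most once on the boundary of each $2$-cell, so the incidence pattern around $\gamma$ is as simple as the combinatorics of $\Lambda$ dictates, and the gluing along shared $1$-cells is unambiguous.
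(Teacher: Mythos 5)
Your overall strategy---translate a non-tree hypergraph into a relator diagram satisfying Condition $\dagger(\ell)$---matches the paper's, but there is a genuine gap in what you take ``not a tree'' to mean, and the paper's formulation of the minimal bad object neatly resolves the bookkeeping you flag as worrisome.

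The gap: you only consider the case where $\Lambda$, viewed as an abstract graph, contains a cycle. But the statement actually required (and used in the proof of Theorem~\ref{mainthm3}, which cites this lemma to get ``embedded trees'') is that $\varphi(\Lambda)$ is an \emph{embedded} tree. Even when $\Lambda$ itself is a tree, the map $\varphi$ may fail to be injective: two distinct edges of $\Lambda$ carrying the same label $F$ produce two antipodal arcs inside the single $2$-cell $F$, and two such arcs must cross (Jordan curve argument), creating a circuit in $\varphi(\Lambda)$ without any cycle in $\Lambda$. Your contrapositive never enters this case, so you cannot conclude embeddedness. The paper sidesteps this by starting not from a cycle in $\Lambda$ but from a \emph{minimal-length path} $e_1,\ldots,e_m$ in $\Gamma$ on which $\varphi$ fails to be injective. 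Minimality forces the $e_i$ to be pairwise distinct (and hence, with the no-repeated-letter hypothesis killing the $m=1$ self-loop case, the corresponding $2$-cells $F_1,\ldots,F_m$ are distinct except for the possible coincidence $F_1 = F_m$), and the path $e_1,\ldots,e_m$ together with the $F_i$ immediately assembles into a $\dagger(m)$ or $\dagger(m-1)$ configuration. This single device handles both ``$\Lambda$ has a cycle'' and ``$\Lambda$ is a tree but $\varphi$ self-intersects,'' and it also disposes automatically of the distinctness and ``single shared $1$-cell'' concerns you leave open. So the fix is not cosmetic: you should replace ``embedded simple cycle in $\Lambda$'' with ``minimal path on which $\varphi$ is non-injective,'' at which point the rest of your construction goes through essentially as you describe.
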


\begin{proof}
We prove this by contradiction. If $k$ is even, then let $\Gamma =
\Gamma_{\mathcal{P}}$ and $\varphi = \phi$. If $k$ is odd, then let
$\Gamma = \Gamma_{\mathcal{P}}^H$ and $\varphi = \phi^H$. 

 Suppose there exists a hypergraph $\Lambda$ in $\mathcal{P}$
 such that $\phi(\Lambda)$ contains a circuit. This means that there is a
 path $e_{1}, \hdots,e_{m}$ 
of edges in $\Gamma$, with $m\geq 1$, such that $\phi$ is
not injective when restricted to this path.  
 We can assume that
 this path is of minimal length, so that $e_{i}\neq e_{j}$ for $i\neq
 j$: this may mean that the path is not a cycle in $\Gamma$. 

If $m=1$, then
 the edge $e_1$ is a loop in $\Gamma$, since
 $\varphi(e_1)$ can only self-intersect at its end-points.
However this means that a letter appears more than once in a relator,
a contradiction.
	
	For $m\geq 2$, form the following relator diagram $D$. 
 Pick as the first $2$-cell $F_{1}$ the (unique) $2$-cell which contains
 $e_{1}$. Now for $2 \leq i \leq m$, pick $F_{i}$ as the 
 $2$-cell containing $\varphi(e_{i})$. The minimality of $m$ ensures
 that each $F_i$ is distinct, except possibly $F_m = F_1$ when $m
   \geq 3$,
 and it is clear that $D$ satisfies the three
 requirements of Condition $\dagger(m)$ if $F_m \neq F_1$ and
 $\dagger(m-1)$ otherwise. 
By assumption such a relator diagram does not exist, and so such a $\Lambda $ does not exist.
	\end{proof}

We now consider the first necessary condition for freeness in Lemma~\ref{daggeriimplesnononembedhyp}.
\begin{Lemma}\label{probdaggeriexistnormal}

	Let $k \geq 2$, let $0< d<\frac{1}{k}$, and let $G=\langle
        X\;\vert\;R\rangle$ be a  random group in an $(n,k,d)$
        model. Let $\ell \geq 2$, and let $P_{\ell}$ denote the probability that an
        abstract relator diagram which gives rise to a relator diagram
        satisfying Condition $\dagger (\ell)$ is fulfillable in $G$.
 
There exists a constant $c:= c(d, k)<0$, depending on
        $d$, $k$ and the choice of model, such that for sufficiently
        large $n$, the value of $P_{\ell}$ is
        at most $(2n-1)^{c\ell}$ for the standard model, and at
        most $n^{c\ell}$ for the positive model. 
\end{Lemma}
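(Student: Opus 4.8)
The strategy is to count abstract relator diagrams satisfying $\dagger(\ell)$ and multiply by the probability that a single such diagram is fulfillable, then optimise over the choices so that the product decays geometrically in $\ell$. First I would fix $\ell \geq 2$ and estimate the number of abstract relator diagrams giving rise to a diagram satisfying $\dagger(\ell)$: such a diagram is a cyclic chain of $\ell$ faces, each of boundary length $k$, with consecutive faces sharing exactly one edge; up to the combinatorial data (relator numbers $1,\dots,\ell$, orientations, start points, and which edge of each face is glued to its neighbours) there are at most $(Ck)^{\ell}$ of them for some absolute constant $C$, since each face contributes a bounded amount of data ($2$ orientations, $k$ start points, $O(k^2)$ gluing-position choices). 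This is a much cruder count than Lemma~\ref{vkdbound} but it suffices here because we have a constant-free geometric target.

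Next I would bound $P_\ell$ for a single such diagram using the same mechanism as Lemma~\ref{pi inequality Olliv} and Lemma~\ref{linisolliv}: a diagram satisfying $\dagger(\ell)$ has $|A| = \ell$ faces and $|\partial A| = (k-2)\ell$ interior-to-boundary... more precisely it has $(k-1)\ell$ edges in total, of which $\ell$ are interior (the shared ones) and $(k-2)\ell$ are boundary, so $\sum_f \omega(f)$ counts the $\ell$ interior edges, giving $\sum_i m_i \omega_i \geq$ roughly $\ell/\ell_{\max}$-type control. Running the Lemma~\ref{linisolliv} computation with $|A| = \ell$, $|\partial A| = (k-2)\ell$ and $|R| = (2n-1)^{dk}$ (resp. $n^{dk}$), the probability that $\ell$ relators in $R$ partially fulfil the diagram is at most $(2n-1)^{\ell(dk - 1 + o(1))}$ (resp. $n^{\ell(dk-1)}$): each of the $\ell$ faces contributes a factor $(2n-1)^{dk}$ for the choice of relator and at least one factor $(2n-1)^{-1}$ from a forced letter on its shared edge. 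Multiplying by the $(Ck)^\ell$ diagram count gives $P_\ell \leq (Ck)^\ell (2n-1)^{\ell(dk-1+o(1))}$.

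Since $d < 1/k$ we have $dk - 1 < 0$, so for $n$ large enough the $(2n-1)^{dk-1}$ factor beats the constant $(Ck)$: concretely, pick $c := (dk-1)/2 < 0$; then for $n$ with $(2n-1)^{-|dk-1|/2} \leq (Ck)^{-1}$ and the $o(1)$ absorbed, we get $P_\ell \leq (2n-1)^{c\ell}$ in the standard model, and similarly $n^{c\ell}$ in the positive model (where the base is $n$ throughout and Lemma~\ref{pi inequality Olliv} is replaced by $p_i \leq p_{i-1} n^{-\omega_i}$ as in the proof of Theorem~\ref{standard-hyp}). The constant $c = c(d,k)$ depends only on $d$, $k$ and the model, as required.

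\textbf{Main obstacle.} The delicate point is not any single estimate but making sure the $\ell$ factors of $(2n-1)^{-1}$ from forced letters are genuinely there and not absorbed by boundary-effects: one must check that in a $\dagger(\ell)$ diagram the ownership/belonging relation of Lemma~\ref{pi inequality Olliv} really does assign, for each relator number $i$, at least one owned edge to some face bearing $i$ — equivalently that each of the $\ell$ shared interior edges is "new" relative to the face that owns it. Because the faces are glued in a simple cycle with distinct relator numbers and each interior edge lies on exactly two faces, the lexicographic ownership rule assigns each interior edge to exactly one of its two faces, so $\sum_i m_i\omega_i = \ell$ with each $m_i = 1$; this is what forces the net exponent $\ell(dk-1)$ rather than something weaker. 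I would also need to be slightly careful that the degenerate cases ($\ell = 2$, or faces sharing an edge in a way that a letter repeats) are excluded by the hypotheses of the ambient lemmas, but these are exactly the situations ruled out by reducedness and by the standing assumption that no generator appears twice in a relator.
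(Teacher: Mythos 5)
Your argument is correct and does reach the claimed bound, but it takes a genuinely different route from the paper. The paper's proof is a direct hypergeometric count: it fixes a single abstract relator diagram $D_\ell$, observes that it has $(k-1)\ell$ edges so there are at most $(2n)(2n-1)^{(k-1)\ell-1}$ tuples of words that could fulfil it, and computes the probability $P_\alpha$ that any one such $\ell$-tuple lies entirely in $R$ via the hypergeometric formula $P_\alpha = \binom{|C_{k,n}|-\ell}{(2n-1)^{kd}-\ell}/\binom{|C_{k,n}|}{(2n-1)^{kd}} < (2n-1)^{\ell kd}/|C_{k,n}|^\ell$; multiplying and absorbing the slack into small $\delta_1,\delta_2>0$ gives $c = kd-1+\delta_1+\delta_2<0$. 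You instead route the probability bound through the Ollivier-style machinery of Lemma~\ref{pi inequality Olliv}, getting $p_\ell \leq (2n-1)^{-\sum\omega_i}(\tfrac{2n-1}{2n-2})^\ell$ with $\sum\omega_i = \ell$ (which you correctly justify by noting each of the $\ell$ interior edges is owned by exactly one face), then multiplying by $|R|^\ell$. Both work, and your exponent $\ell(kd-1)$ agrees with theirs; what the paper's route buys is self-containment (it doesn't need to extend the van Kampen $\omega$-machinery to relator diagrams, which are annular or M\"obius and hence not formally covered by the hypotheses of Lemma~\ref{pi inequality Olliv} --- you should at least remark that the proof of that lemma uses no simple-connectedness). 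Two smaller points: you fold in a factor $(Ck)^\ell$ for the number of abstract relator diagrams, but the lemma as stated bounds the fulfillability probability of a \emph{fixed} diagram; the diagram count and union bound are the content of Theorem~\ref{normhyptrees}, so including it here proves a stronger statement than asked but mislabels what $P_\ell$ is. And your phrase ``each face contributes at least one factor $(2n-1)^{-1}$'' is not literally true (in the cyclic ownership pattern the face with the smallest relator number owns nothing, and the one with the largest owns two); only the total $\sum\omega_i=\ell$ is what you need, which you do state correctly in the final paragraph.
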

\begin{proof}
First consider the standard model. 
Let $D_{\ell}$ be an abstract relator diagram
which gives rise to a relator diagram satisfying Condition $\dagger
(\ell)$. 

Let $R_{\ell}$ denote the set of distinct $\ell$-tuples of cyclically reduced
words fulfilling $D_{\ell}$: we first determine $|R_{\ell}|$. 
There are $(k-1)\ell$ $1$-cells in $D_{\ell}$, the first of which may
be labelled freely whilst the rest have at most $2n-1$ possible
labels. Hence
$|R_{\ell}| \leq (2n)(2n-1)^{(k-1)\ell-1}$, and so for any $\delta_1 >
0$, for $n$ sufficiently large we can bound
$$|R_{\ell}| \leq (2n-1)^{(k-1+\delta_{1})\ell}.$$ 

For $\alpha\in R_{\ell}$, let $P_{\alpha}$ be the probability that
$\alpha\subseteq R$; that is, the probability that these particular
$\ell$ cyclically reduced words are in $R$. Then
	\begin{equation*}
	P_{\ell}\leq \sum\limits_{\alpha\in R_{\ell}}P_{\alpha}.
	\end{equation*}
	Recall that $C_{k,n}$ denotes the set of cyclically reduced
        words of length $k$ in $F_{n}$. We see that
	\begin{equation*}
P_{\alpha}=\dfrac{{\dbinom{\vert C_{k,n}\vert-\ell}{(2n-1)^{kd}-\ell}}}{{\dbinom{\vert C_{k,n}\vert}{(2n-1)^{kd}}}}
	=\dfrac{(2n-1)^{kd}((2n-1)^{kd}-1)\hdots ((2n-1)^{kd}-\ell+1)}{\vert C_{k,n}\vert(\vert C_{k,n}\vert-1)\hdots(\vert C_{k,n}\vert-\ell+1)}
	 <\frac{(2n-1)^{\ell kd}}{\vert C_{k,n}\vert^{\ell}},
	\end{equation*}
since $|C_{k, n}| > (2n-1)^{kd}$. 
Now, for any $\delta_{2}>0$, for $n$ sufficiently large
$\vert C_{k,n}\vert >(2n-1)^{k-\delta_{2}}$.  Therefore,
for $n$ sufficiently large  we can bound $P_{\alpha} <  (2n-1)^{(kd-k+\delta_{2})\ell}$, and so 
\begin{equation*}
	P_{\ell}\leq \vert R_{\ell}\vert P_{\alpha}< (2n-1)^{(k-1+\delta_{1})\ell}(2n-1)^{(kd-k+\delta_{2})\ell}=(2n-1)^{(kd-1+\delta_{1}+\delta_{2})\ell},
\end{equation*}
for all $\delta_{1},\delta_{2}>0$. 

Since $kd-1<0$, we can 
choose $\delta_{1},\delta_{2}>0$ such that $c= c(d, k) 
= kd-1+\delta_{1}+\delta_{2}<0$. We then conclude that for $n$
sufficiently large we can bound
$P_{\ell}\leq (2n-1)^{c\ell}$,
as required. 

The proof for the positive model is similar but easier:
$|R_{\ell}| = n^{(k-1)\ell}$  and $P_{\alpha}  < n^{k \ell (d-1)}$, so
  $P_{\ell} \leq n^{(kd - 1)\ell}$, and we can set $c = c(d, k) = kd -
  1 < 0$. 
\end{proof}

\begin{theorem}\label{normhyptrees}
	Let $k\geq 2$, $0 < d<\frac{1}{k}$, and $G=\langle
        X\;\vert\;R\rangle$ be a
 random group in an $(n,k,d)$ model. Asymptotically almost surely $G$ has no relator diagrams satisfying Condition $\dagger (\ell)$ for any $\ell\geq 2$.\end{theorem}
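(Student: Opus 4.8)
The plan is to combine the counting estimate of Lemma~\ref{probdaggeriexistnormal} with a union bound over all abstract relator diagrams of a given size, exactly as in the passage from Lemma~\ref{ollivlii} to Theorem~\ref{standard-hyp}. First I would observe that, by Lemma~\ref{daggeriimplesnononembedhyp}, if $G$ has a relator diagram satisfying Condition $\dagger(\ell)$ for some $\ell$, then it has an abstract relator diagram giving rise to one; so it suffices to bound the probability that some abstract relator diagram giving rise to a Condition $\dagger(\ell)$ diagram is fulfillable, summed over $\ell \geq 2$. For a fixed $\ell$, Lemma~\ref{probdaggeriexistnormal} bounds the fulfillability probability of a single such diagram by $(2n-1)^{c\ell}$ (standard model) or $n^{c\ell}$ (positive model), with $c = c(d,k) < 0$.

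The remaining ingredient is a bound on the number $M(\ell)$ of abstract relator diagrams giving rise to a Condition $\dagger(\ell)$ diagram. Here the structure is very rigid: the underlying diagram is a cycle of $\ell$ $2$-cells $F_1, \ldots, F_\ell$, each of boundary length $k$, glued in a ring along single shared $1$-cells. Combinatorially such a diagram is determined by, for each $j$, the position (out of $k$) of the shared edge with $F_{j-1}$ in $\partial F_j$, the position of the shared edge with $F_{j+1}$, an orientation ($2$ choices), a start point ($k$ choices), and a relator number ($\ell$ choices since the relators are distinct among at most $\ell$ of them); whether the diagram is planar-annular or a Möbius strip adds only a bounded factor. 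Thus $M(\ell) \leq (c' k^4 \ell)^{\ell}$ for an absolute constant $c'$ — polynomial-times-exponential in $\ell$, with no dependence on $n$. Hence the probability that $G$ admits a fulfillable abstract relator diagram giving rise to Condition $\dagger(\ell)$ is at most $M(\ell)(2n-1)^{c\ell}$ in the standard model. Since $\log(2n-1) \to \infty$ while $M(\ell)$ is independent of $n$, for each fixed $\ell$ this tends to $0$; moreover for $n$ large enough that $(2n-1)^{c} < 1/(2 c' k^4)$ say, the sum $\sum_{\ell \geq 2} M(\ell)(2n-1)^{c\ell}$ is dominated by a convergent geometric-type series whose value tends to $0$ as $n \to \infty$. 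The positive model is identical with $(2n-1)$ replaced by $n$. Therefore a.a.s.\ $G$ has no relator diagram satisfying Condition $\dagger(\ell)$ for any $\ell \geq 2$.

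The only mildly delicate point is handling the sum over all $\ell$ simultaneously rather than for each fixed $\ell$: because $M(\ell)$ grows like $\ell^{\ell}$ (super-exponentially), one cannot simply pull a single geometric ratio out front. I would address this by splitting the sum at $\ell = \ell_0(n) := \lfloor (2n-1)^{-c/2} \rfloor$ or similar: for $\ell \leq \ell_0$ use $M(\ell) \leq (c'k^4 \ell_0)^\ell$ and sum the resulting geometric series with ratio $(c'k^4\ell_0)(2n-1)^{c} \to 0$; for $\ell > \ell_0$ note $M(\ell)(2n-1)^{c\ell} = (c'k^4\ell (2n-1)^c)^\ell \leq ((2n-1)^{c/2}\cdot \mathrm{stuff})^\ell$ decays faster than any fixed geometric series once $\ell_0$ is large. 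This is the same device used implicitly in Ollivier-type arguments, and it is routine; I do not expect it to be a genuine obstacle.
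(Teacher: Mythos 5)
Your overall strategy is the same as the paper's: bound, for each $\ell$, the number of abstract relator diagrams giving rise to a Condition $\dagger(\ell)$ diagram, multiply by the fulfillability bound from Lemma~\ref{probdaggeriexistnormal}, and take a union bound over $\ell$. However, your count $M(\ell) \leq (c'k^4\ell)^\ell$ is wrong, and the extra $\ell^\ell$ factor you introduce is not a ``mildly delicate point'' to be handled by splitting the sum --- it is a genuine error whose proposed fix does not work.

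The factor $\ell^\ell$ comes from allowing ``one of at most $\ell$ choices of relator number'' per face. But Condition $\dagger(\ell)$ already stipulates that the $\ell$ faces bear $\ell$ \emph{distinct} relators, and two abstract relator diagrams that differ only by a permutation of the relator numbers are equivalent (they are fulfillable by exactly the same tuples of relators from $R$). So up to equivalence there is exactly \emph{one} way to assign relator numbers; the only decorations that vary are an orientation and a start point per face, giving the paper's bound $(2k)^\ell$ --- exponential in $\ell$ with base independent of $n$ and $\ell$. (Any polynomial-in-$k$ correction for the choice of which edge of $F_j$ is shared with $F_{j\pm 1}$ is harmless, since $k$ is fixed.) With a bound of the form $(Ck^a)^\ell$ the union bound is a genuine geometric series: $\sum_{\ell\ge 2}(Ck^a m^c)^\ell \to 0$ as $n\to\infty$, with $m = 2n-1$ or $n$.

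Your proposed workaround for the $\ell^\ell$ factor fails. Writing the $\ell$-th term as $(C'\ell\, m^c)^\ell$ with $c<0$, once $\ell > m^{-c}/C'$ the base exceeds $1$, so the terms grow without bound and the tail sum $\sum_{\ell > \ell_0}(C'\ell\, m^c)^\ell$ diverges --- it is not ``dominated by a convergent geometric-type series.'' Your cutoff $\ell_0 \approx m^{-c/2}$ sits well below this blow-up threshold but does nothing to prevent it. (One could try to cap $\ell$ by $|R| \approx m^{kd}$, but for $d$ close to $1/k$ this exceeds $m^{-c}$, so this rescue also fails in general.) The remedy is not a more clever sum split but the correct count: the distinctness-of-relators constraint in Condition $\dagger(\ell)$ kills the $\ell^\ell$ factor outright. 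One small further slip: the reduction ``a $\dagger(\ell)$ relator diagram exists iff a fulfillable abstract relator diagram giving rise to one exists'' is by definition of fulfillability, not a consequence of Lemma~\ref{daggeriimplesnononembedhyp}, which points in the other direction (absence of $\dagger(\ell)$ diagrams implies hypergraphs are trees).
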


\begin{proof}
First we show that the number of abstract relator diagrams giving rise to a relator diagram for $G$ satisfying Condition $\dagger (\ell)$ is at most $(2k)^{\ell}$.
Due
          to Condition $\dagger (\ell)$, every face of such an
          abstract relator diagram is labelled by a different number.
        Hence there is (up to equivalence) only one way to choose the
        labels. There are $2$ choices of orientation for each face,
        and $k$ choices of start point for each relator. Hence there
        are
 at most $(2k)^{\ell}$ such abstract relator diagrams.

Now let $m = 2n-1$ for the standard model, and $m = n$ for the positive model.
By Lemma \ref{probdaggeriexistnormal}, there exists a constant $c< 0$
such that for $m$ sufficiently large the probability
that a relator diagram satisfying Condition $\dagger (\ell)$ exists
for $G$ is at most $(2k)^\ell m^{c\ell}$. Since $2km^{c}$ tends to $0$ as
$m \rightarrow \infty$, the probability that such a diagram exists in
$G$ for any $\ell$ is at most
$$
	\sum\limits_{\ell=2}^{\infty}(2km^{c})^{\ell}\leq  \sum\limits_{\ell=0}^{\infty}(2km^{c})^{\ell}-1 = \frac{1}{1-2km^{c}}-1
$$
which tends to $0$ as $m$ tends to $\infty$.
\end{proof}

\begin{proof}[Proof of Theorem~\ref{mainthm3}]
First we show that asymptotically almost surely there are no repeated
letters from $X$ in any relator of $R$. 
	Consider the standard $(n,k,d)$ model. The probability that a
        fixed letter (or its inverse) appears twice in a fixed relator
        is at most
        $\frac{k(k-1)}{2} \frac{4}{(2n-2)^{2}}$, since at each position in the
        relator we have at least $2n-2$ choices for the letter.  The
        size of $|X| = n$, and there are 
        $(2n-1)^{dk}$ relators,  so the probability that any letter
        appears more than  once in a relator is at most
	$$\frac{k(k-1)2n(2n-1)^{dk}}{(2n-2)^{2}}\rightarrow 0\mbox{ as }n\rightarrow\infty.
	$$ The proof
        for the positive model is similar but easier. 

By Theorem~\ref{normhyptrees}, asymptotically almost surely $G$ has no
relator diagrams satisfying Condition $\dagger(\ell)$ for any $\ell
\geq 2$, so by Lemma~\ref{daggeriimplesnononembedhyp}, asymptotically
almost surely all hypergraphs in the presentation complex
$\mathcal{P}$ of $G$ are embedded trees. 
The result now follows from Theorem~\ref{hyptreefree}.
\end{proof}

\section{Appendix: proof of Lemma~\ref{red_connected}}

We thank Louis Theran for sketching out some of the ideas in this
section to us. 

\begin{definition}
An \emph{Erd\"{o}s-R\'enyi  random bipartite graph} $\Gamma(a,b,p)$ is
a bipartite graph with $\vert V_{1}(\Gamma)\vert=a$, $\vert
V_{2}(\Gamma)\vert =b$, and with each edge added with probability
$p$. \end{definition}

First we establish a lower bound of the degree of each vertex in
$V_1$. 

\begin{Lemma}\label{erdosrenyiconnected}
	Let $d>\frac{1}{2}$, let $m\geq 1$, fix
        $0<\varepsilon<d-\frac{1}{2}$, and let $p=n^{(2m+1)(d-1)}$. With
        probability tending to $1$ as $n\rightarrow\infty$, all
        vertices in $V_{1}$ in $\Gamma
        (n^{m},n^{m+1},p)$ have degree at least
        $k=n^{\frac{1}{2}+\varepsilon (2m+1)}$. 
\end{Lemma}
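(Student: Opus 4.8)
The plan is to use a first-moment (union bound) argument over the "bad" event that some vertex in $V_1$ has small degree, combined with a Chernoff-type tail bound for the binomial distribution governing each individual degree. In $\Gamma(n^m, n^{m+1}, p)$ with $p = n^{(2m+1)(d-1)}$, each vertex $v \in V_1$ has degree $\deg(v) \sim \mathrm{Bin}(n^{m+1}, p)$, so its expected degree is $\mu := n^{m+1} p = n^{m+1} n^{(2m+1)(d-1)} = n^{(2m+1)d - m}$. Since $d > \tfrac{1}{2}$, we have $(2m+1)d - m = m(2d-1) + d > \tfrac{1}{2} + \varepsilon(2m+1)$ for all sufficiently small $\varepsilon$ (indeed $m(2d-1) + d - \tfrac12 = (2m+1)(d - \tfrac12) > (2m+1)\varepsilon$ precisely when $\varepsilon < d - \tfrac12$, which is our hypothesis). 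Hence the target threshold $k = n^{1/2 + \varepsilon(2m+1)}$ satisfies $k = o(\mu)$; in fact $k \leq \mu^{1 - \eta}$ for some fixed $\eta > 0$ and all large $n$.

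The key steps, in order: (1) Record that $\deg(v)$ is binomial with mean $\mu$ as above, and that $k/\mu \to 0$ polynomially. (2) Apply a standard lower-tail Chernoff bound: for a binomial random variable $Z$ with mean $\mu$ and any $0 < a < \mu$, $P[Z \leq a] \leq \exp(-(\mu - a)^2/(2\mu)) \leq \exp(-\mu/8)$ once $a \leq \mu/2$, which holds here since $k = o(\mu)$. So $P[\deg(v) < k] \leq \exp(-\mu/8)$ for each fixed $v \in V_1$. (3) Union bound over the $|V_1| = n^m$ vertices:
\begin{equation*}
P[\exists v \in V_1 : \deg(v) < k] \;\leq\; n^m \exp(-\mu/8) \;=\; \exp\!\bigl(m \ln n - \tfrac{1}{8} n^{(2m+1)d - m}\bigr).
\end{equation*}
Since the exponent $(2m+1)d - m = m(2d-1) + d$ is strictly positive (as $d > \tfrac12 > \tfrac{m}{2m+1}$), the term $n^{(2m+1)d-m}$ grows polynomially in $n$ while $m\ln n$ grows only logarithmically, so the right-hand side tends to $0$ as $n \to \infty$. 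This gives the claimed a.a.s. statement.

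The main obstacle — and it is minor — is bookkeeping the inequality $(2m+1)d - m > \tfrac12 + \varepsilon(2m+1)$ carefully enough to conclude both that $k = o(\mu)$ (needed to invoke the Chernoff bound with $a = k \leq \mu/2$) and that the exponent of $n$ in $\mu$ is positive (needed for the union bound to close). Both follow from the single algebraic fact $(2m+1)(d - \tfrac12) > (2m+1)\varepsilon$, i.e. $\varepsilon < d - \tfrac12$, which is exactly the hypothesis; so no real difficulty arises. One should also note that the same argument works verbatim if one instead uses the concrete lower-tail estimate for sampling (or the Serfling-type bound of Lemma~\ref{serfling}) rather than a generic Chernoff inequality, should the paper prefer to avoid citing a new concentration result; the exponential-in-$\mu$ decay is all that is used.
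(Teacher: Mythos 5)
Your proposal is correct and follows essentially the same route as the paper: both fix a vertex, note that its degree is $\mathrm{Bin}(n^{m+1},p)$ with mean $\mu=n^{(2m+1)d-m}=n^{1/2+(d-1/2)(2m+1)}$ polynomially exceeding $k=n^{1/2+\varepsilon(2m+1)}$ by the hypothesis $\varepsilon<d-\tfrac12$, apply a lower-tail Chernoff bound, and close with a union bound over the $n^m$ vertices of $V_1$. Your write-up is slightly more explicit about the bookkeeping (in particular the simplification to $\exp(-\mu/8)$ once $k\leq\mu/2$), but the argument is the same.
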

\begin{proof}
Fix $v \in V_1$. The degree $d(u)$  of a vertex 
$u\in V_{1}$ satisfies $ d(u)\sim B(n^{m+1},p)$. Furthermore the
degrees of the vertices are independent, so the probability $P_{k}$
that there exists a
vertex in $V_{1}$ of degree less than $k$
is $P_k = n^{m}P(d(v) < k)$. Let $\lambda = d - 1/2$, so that $\lambda
> \varepsilon$. 
Notice that $k = n^{1/2 + \varepsilon(2m+1)}$, whilst the expected
value of $d(v)$ is $n^{m+1}p = n^{1/2 +
  \lambda(2m+1)}$, so  
we can use the multiplicative lower tail form of Chernoff's
 inequality to bound $$P(d(v) < k)=P(d(v)\leq k-1) \leq \exp\{
 -\frac{(n^{m+1}p-(k-1))^{2}}{2n^{m+1}p}\},$$ and so  
$P_{k}\leq n^{m} \exp\{-\frac{(n^{m+1}p-(k-1))^{2}}{2n^{m+1}p} \},$
which tends to $0$ as $n \rightarrow \infty$. 
\end{proof}

The following standard result is proved by by approximating
the binomial distribution $Bin(t, p)$ by the normal distribution
$N(tp, tp(1-p))$.

\begin{Lemma}\label{binnote}
	Let $X\sim Bin(t,p)$, such that $p$ is a function of $t$, and $\sqrt{tp}=o(tp)$. Then with probability tending to $1$ as $t$ tends to $\infty$, $X\in[\frac{1}{2}tp,\frac{3}{2}tp]$.
\end{Lemma}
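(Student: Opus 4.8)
The plan is to recognise that the hypothesis $\sqrt{tp} = o(tp)$ is nothing more than a disguised form of the condition $tp \to \infty$ as $t \to \infty$, since $\sqrt{tp}/(tp) = (tp)^{-1/2}$. Granting this, I would set $\mu = tp$ (the mean of $X$) and $\sigma^2 = tp(1-p) \leq tp$ (its variance), and observe that the event $X \notin [\tfrac{1}{2}tp, \tfrac{3}{2}tp]$ coincides with $\{\,|X-\mu| \geq \tfrac{1}{2}\mu\,\}$; so it suffices to bound the probability of a $\tfrac{1}{2}$-relative deviation of $X$ from its mean.

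The cleanest route is then a single application of Chebyshev's inequality:
\[
P\!\left(|X-\mu| \geq \tfrac{1}{2}tp\right) \;\leq\; \frac{\sigma^2}{(tp/2)^2} \;=\; \frac{4tp(1-p)}{(tp)^2} \;\leq\; \frac{4}{tp},
\]
which tends to $0$ exactly because $tp \to \infty$, and that already finishes the proof. If one prefers to match the normal-approximation heuristic mentioned just before the statement, an alternative is to invoke the multiplicative Chernoff bounds (as already used in the proof of Lemma~\ref{erdosrenyiconnected}): the lower-tail form gives $P(X \leq \tfrac{1}{2}\mu) \leq \exp(-\mu/8)$ and the upper-tail form gives $P(X \geq \tfrac{3}{2}\mu) \leq \exp(-\mu/12)$, and summing these two exponentially small quantities again yields a bound tending to $0$ since $\mu = tp \to \infty$.

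I do not expect a real obstacle here: Chebyshev's inequality holds for each fixed choice of parameters, and the resulting estimate depends on $t$ and $p$ only through the product $tp$, so the fact that $p$ is a function of $t$ causes no difficulty with uniformity. The one place where care would be needed is if one insisted on running the literal normal-approximation argument, since then a quantitative central limit theorem (e.g.\ a Berry--Esseen estimate) would be required to control the approximation error uniformly as $t$ varies; avoiding this is precisely why the Chebyshev or Chernoff argument is preferable.
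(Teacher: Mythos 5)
Your Chebyshev argument is correct and complete, and it is in fact more rigorous than what the paper offers. The paper does not actually prove this lemma: it only remarks that the result ``is proved by approximating the binomial distribution $Bin(t,p)$ by the normal distribution $N(tp,tp(1-p))$,'' with no further detail. As you correctly observe, turning that normal-approximation heuristic into a proof would require a quantitative central limit theorem (e.g.\ Berry--Esseen) to control the approximation error uniformly as $t$ varies and $p=p(t)$ changes with it. Your one-line Chebyshev bound
$P\bigl(|X-tp|\geq \tfrac{1}{2}tp\bigr)\leq 4(1-p)/(tp)\leq 4/(tp)$
avoids this entirely, is fully elementary, and makes transparent that the hypothesis $\sqrt{tp}=o(tp)$ is nothing but $tp\to\infty$. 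The Chernoff alternative you sketch (re-using the multiplicative bound already invoked in Lemma~\ref{erdosrenyiconnected}) would also do, giving an exponentially small failure probability; but the polynomial $4/(tp)$ rate is already more than sufficient for the single use of this lemma in Lemma~\ref{bipartitegraphconn}. The only microscopic imprecision in your write-up is that the complement of $X\in[\tfrac12 tp,\tfrac32 tp]$ is the event $|X-tp|>\tfrac12 tp$ with a strict inequality, not $\geq$; this does not affect the bound since you are bounding a superset. Either of your arguments is cleaner and more self-contained than the paper's appeal to normal approximation.
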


\begin{Lemma}\label{bipartitegraphconn}
Let $d>\frac{1}{2}$, $m\geq 2$,
                $0<\varepsilon<d-\frac{1}{2}$,
                $k=n^{\frac{1}{2}+\varepsilon (2m+1)}$, and
                $E=n^{(2m+1)d}$. Let $\mathcal{K}$ be the set of
                bipartite random graphs such that all vertices in
                $V_{1}$ have degree at least $k$. Then with
                probability tending to $1$ as $n$ tends to infinity,
                $\Gamma (n^{m},n^{m+1},E)$ is in $\mathcal{K}$. 
\end{Lemma}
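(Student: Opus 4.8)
The plan is to transfer the degree bound of Lemma~\ref{erdosrenyiconnected} from the Erd\H{o}s--R\'enyi model $\Gamma(n^m, n^{m+1}, p)$ to the uniform model $\Gamma(n^m, n^{m+1}, E)$ via the standard coupling between the two models. Set $p = n^{(2m+1)(d-1)}$, so that the expected number of edges in $\Gamma(n^m, n^{m+1}, p)$ is $n^{m+1} \cdot n^m \cdot p = n^{(2m+1)d} = E$. First I would apply Lemma~\ref{binnote} (with $t = n^{2m+1}$ and the given $p$, noting $\sqrt{tp} = o(tp)$ since $tp = n^{(2m+1)d} \to \infty$) to conclude that with probability tending to $1$ the number of edges $\lvert E(\Gamma(n^m,n^{m+1},p))\rvert$ lies in $[\tfrac{1}{2}E, \tfrac{3}{2}E]$. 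In particular, conditioned on having \emph{exactly} $E$ edges, the Erd\H{o}s--R\'enyi graph is distributed exactly as $\Gamma(n^m, n^{m+1}, E)$.

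Next I would run the coupling argument. Condition on $\lvert E(\Gamma(n^m,n^{m+1},p))\rvert = E'$ for some $E' \in [E, \tfrac{3}{2}E]$ (this is an event of probability bounded below by a constant, indeed of probability $\to \tfrac12$ by symmetry, but all we need is that its probability does not go to $0$). Conditioned on this, the graph is uniform among bipartite graphs with $E'$ edges, and deleting $E' - E$ uniformly chosen edges yields a uniform bipartite graph with exactly $E$ edges, i.e. $\Gamma(n^m, n^{m+1}, E)$. Since $E' - E \leq \tfrac{1}{2}E$ and there are $n^m$ vertices in $V_1$, the expected number of edges deleted from the star of a fixed vertex is at most $(\tfrac12 E)/n^m \cdot (\text{max degree})$; more cleanly, one shows that a.a.s. no vertex of $V_1$ loses more than, say, half of its incident edges — e.g. by a union bound over $V_1$ combined with a Chernoff/hypergeometric tail bound, using that a.a.s. every vertex of $V_1$ had degree at least $k = n^{1/2 + \varepsilon(2m+1)}$ \emph{and} at most $\tfrac{3}{2} n^{m+1} p = \tfrac32 n^{1/2 + (d-1/2)(2m+1)}$ (the latter again from Lemma~\ref{binnote} applied to individual vertex degrees, or from a direct tail bound). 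After deleting at most half the edges at each vertex, every vertex of $V_1$ retains degree at least $\tfrac12 k \geq n^{1/2 + \varepsilon(2m+1)}/2$; replacing $\varepsilon$ by a slightly smaller $\varepsilon'$ at the outset absorbs this factor of $2$ and recovers the stated bound with exponent $\tfrac12 + \varepsilon'(2m+1)$, which after relabelling is the claim.

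Finally I would assemble the probabilities: let $\mathcal{A}$ be the event (in the $\Gamma(n^m,n^{m+1},p)$ model) that every vertex of $V_1$ has degree between $k$ and $\tfrac32 n^{m+1}p$, and $\mathcal{B}$ the event that $\lvert E \rvert \in [E, \tfrac32 E]$; both have probability $\to 1$ (the first by Lemma~\ref{erdosrenyiconnected} together with an upper tail bound, the second by Lemma~\ref{binnote}), hence so does $\mathcal{A} \cap \mathcal{B}$. Conditioning on $\mathcal{B}$ and on the exact edge count, the deletion procedure produces $\Gamma(n^m,n^{m+1},E)$, and on $\mathcal{A} \cap \mathcal{B}$ the resulting graph a.a.s. has minimum $V_1$-degree at least $\tfrac12 k$; since conditioning on an event of probability bounded away from $0$ cannot turn an ``a.a.s.''\ event into one of probability bounded away from $1$, we conclude $\Gamma(n^m,n^{m+1},E) \in \mathcal{K}$ a.a.s. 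The main obstacle is the bookkeeping in the deletion step: one must be careful that the $n^m$-fold union bound over $V_1$ still beats the tail probabilities, which is why the polynomial gap between $k$ and the expected degree (an honest power of $n$) is essential rather than incidental, and why the lemma is only stated for $m \geq 2$ — the extra room in the exponents makes all the tail bounds comfortably summable.
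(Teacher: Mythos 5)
Your proposal is correct in outline but takes a noticeably more involved route than the paper. The paper sets $p = \frac{2}{3}n^{(2m+1)(d-1)}$, so that $E = \frac{3}{2}n^{2m+1}p$ sits at the \emph{top} of the interval $[\frac{1}{2}n^{2m+1}p, \frac{3}{2}n^{2m+1}p]$ in which, by Lemma~\ref{binnote}, the edge count $E_p$ of $\Gamma(n^m, n^{m+1}, p)$ a.a.s.\ lies. Since $\mathcal{K}$ is monotone increasing under adding edges, $P(\Gamma(n^m, n^{m+1}, i) \in \mathcal{K}) \leq P(\Gamma(n^m, n^{m+1}, E) \in \mathcal{K})$ for every $i \leq E$, and decomposing $P(\Gamma(n^m, n^{m+1}, p) \in \mathcal{K})$ over the value of $E_p$ then yields $P(\Gamma(n^m, n^{m+1}, E) \in \mathcal{K}) \geq P(\Gamma(n^m, n^{m+1}, p) \in \mathcal{K}) - o(1) \to 1$, the last step being Lemma~\ref{erdosrenyiconnected} applied at a slightly smaller density $d'$ chosen so that $p \geq n^{(2m+1)(d'-1)}$. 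This single choice of $p$ removes any need to condition, delete edges, or control the deletion. Your version, with $n^{2m+1}p = E$, instead conditions on $E_p \geq E$ (an event of probability bounded away from $0$ but \emph{not} tending to $1$) and prunes the excess edges; this does produce a uniform $E$-edge graph and can be made rigorous, but at the cost of an upper-tail bound on degrees, a hypergeometric concentration bound for the deletion, an $n^m$-fold union bound over $V_1$, and an adjustment of $\varepsilon$ to absorb the lost factor of $2$ in the degree. One small correction: the intermediate heuristic ``the expected number of edges deleted from the star of a fixed vertex is at most $(\frac{1}{2}E)/n^m \cdot (\text{max degree})$'' is dimensionally off; the quantity to concentrate is the expected \emph{fraction} of a vertex's incident edges that are deleted, namely $(E' - E)/E' \leq 1/3$ when $E \leq E' \leq \frac{3}{2}E$, and your second formulation (hypergeometric tail plus union bound over $V_1$) is the one to rely on.
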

\begin{proof}	
Let $p = \frac{2}{3}n^{(2m+1)(d-1)}$, so that that $\frac{3}{2}n^{2m+1}p=E$. 
Then there exists a $d'\in (0,d)$ such that $p\geq
n^{(2m+1)(d'-1)}$, so 
by Lemma \ref{erdosrenyiconnected}, 
$P(\Gamma(n^{m},n^{m+1},p)\in\mathcal{K})\rightarrow1$ as 
$n\rightarrow \infty$. 

Write $E_p$ for $\vert
E(\Gamma(n^{m},n^{m+1},p))\vert$. Since
$\sqrt{n^{2m+1}p}=o(n^{2m+1}p)$, by
Lemma \ref{binnote}, 
\begin{equation*}
	\delta_{n}:=P\left(E_p \not \in [\frac{1}{2}n^{2m+1}p, \frac{3}{2}n^{2m+1}p]\right)
\end{equation*}
satisfies $\delta_{n}=o(1)$. Hence, since the probability of being in $\mathcal{K}$ is only increased by adding edges,
\begin{equation*}
\begin{split}
P(\Gamma(n^{m},n^{m+1},p)\in\mathcal{K})=&\sum\limits_{i=0}^{n^{2m+1}}P(\Gamma(n^{m},n^{m+1},i)\in\mathcal{K})P(E_p=i)\\
		\leq & \sum\limits_{i=\frac{1}{2}n^{2m+1}p}^{\frac{3}{2}n^{2m+1}p}P(\Gamma(n^{m},n^{m+1},i)\in\mathcal{K})P(E_p=i)+\delta_{n}
		\\\leq & \sum\limits_{i=\frac{1}{2}n^{2m+1}p}^{\frac{3}{2}n^{2m+1}p}P(\Gamma(n^{m},n^{m+1},E)\in\mathcal{K})P(E_p=i) +\delta_{n}\\
		\leq & P(\Gamma(n^{m},n^{m+1},E)\in\mathcal{K})+\delta_{n}.		
\end{split}	
\end{equation*}	
Therefore, $P(\Gamma(n^{m},n^{m+1},E)\in\mathcal{K})\geq P(\Gamma(n^{m},n^{m+1},p)\in\mathcal{K})-\delta_{n}\rightarrow 1\mbox{ as }n\rightarrow \infty$.
\end{proof}

\begin{proof}[Proof of Lemma~\ref{red_connected}]
 Let
$0<\varepsilon<d-\frac{1}{2}$. Then by Lemma \ref{bipartitegraphconn},
 with probability tending to $1$ as
as $n \rightarrow \infty$,  each vertex in $V_1$ has degree at least
$\frac{1}{2} n^{1/2 + \varepsilon(2m+1)}$.

Now, let $s_1, s_2 \in V_1$, and consider the neighbourhoods $N(s_1), N(s_2) \subseteq V_2$. We calculate
$$\begin{array}{rl}
P(N(s_1) \cap N(s_2)  = \emptyset) & \leq \left( 1 - \frac{\frac{1}{2} n^{1/2 + \varepsilon(2m+1)}}{n^{m+1}} \right)^{\frac{1}{2} n^{1/2 + \varepsilon(2m+1)}} \\
& = 
\left( 1 - \frac{1}{2}n^{-m - 1/2 + \varepsilon(2m+1)}\right)^{\frac{1}{2} n^{1/2 + \varepsilon(2m+1)}}\\
& \leq \left( e^{-\frac{1}{2}n^{-m  -1/2 +
  \varepsilon(2m+1)}}\right)^{\frac{1}{2} n^{1/2 + \varepsilon(2m+1)}}\\
& = e^{-\frac{1}{4}n^{-m + 2\varepsilon(2m+1)}} \\
& \leq 1 - \frac{1}{4}n^{-m + 2 \varepsilon(2m+1)} + \frac{1}{32}n^{-2m + 4 \varepsilon(2m+1)}.
\end{array}$$
Hence the probability that the two neighbourhoods intersect is at least
$$\frac{1}{4}n^{-m + 2\varepsilon(2m+1)} - \frac{1}{32}n^{-2m + 4 \varepsilon(2m+1)} = \frac{8n^{m - 2\varepsilon(m+1)} - 1}{32n^{2m - 4 \varepsilon(2m+1)}}.$$
Choose $\gamma$ such that $0 < \gamma < 2 \varepsilon(2m+1)$, and let
$\mu = 2\varepsilon(2m+1) - \gamma > 0$. Then for sufficiently large $n$ the probability that the two neighbourhoods intersect is at least
$$\frac{8 n^{m- 2\varepsilon(2m+1) -
    \gamma}}{32n^{2m-4\varepsilon(2m+1)}} =
\frac{1}{4n^{m-\mu}}. $$

Now define a graph $\Gamma'$ with vertices $V_1$, and an edge between two vertices in $\Gamma'$ if and only if the corresponding vertices in $\Gamma$ have a shared neighbour. Then $\Gamma'$ is a random graph on $n^m$ vertices, with edge density at least  $\frac{1}{4n^{m - \mu}}$, and so by standard results due to  Erd\"os and R\'enyi \cite{ErdosRenyi} the graph $\Gamma'$ is connected. The result follows.
\end{proof}

\noindent
Addresses:\\[2mm]
St Cross College\\
Oxford\\
OX1 3LZ\\
email: calum.ashcroft@stx.ox.ac.uk\\[5mm]
School of Mathematics and Statistics\\
University of St Andrews\\ St Andrews\\
Fife KY16 9SS\\
email: colva.roney-dougal@st-andrews.ac.uk

\end{document}